\pgfplotsset{compat=1.3}
\acrodef{phd}[PHD]{Probability Hypothesis Density}
\acrodef{cphd}[CPHD]{Cardinalized PHD}
\acrodef{gm}[GM]{Gaussian Mixture}
\acrodef{pgfl}[PGFL]{Probability Generating Functional}
\acrodef{iid}[i.i.d.]{independent and identically distributed}
\acrodef{wrt}[w.r.t.]{with respect to}
\acrodef{fisst}[FISST]{Finite Set Statistics}
\acrodef{epsrc}[EPSRC]{Engineering and Physical Sciences Research Council}
\acrodef{esric}[ESRIC]{Edinburgh Super-Resolution Imaging Consortium}
\acrodef{smc}[SMC]{Sequential Monte Carlo}
\acrodef{slam}[SLAM]{Simultaneous Localisation And Mapping}
\acrodef{rfs}[RFS]{Random Finite Set}
\acrodef{mc}[MC]{Monte Carlo}
\acrodef{ncv}[n.c.v.]{near constant velocity}
\newcommand{\pushright}[1]{\ifmeasuring@#1\else\omit\hfill$\displaystyle#1$\fi\ignorespaces}
\newcommand{\pushleft}[1]{\ifmeasuring@#1\else\omit$\displaystyle#1$\hfill\fi\ignorespaces}
\newtheorem{thm}{Theorem}[section]
\newtheorem{prop}[thm]{Proposition}
\def\b{\mathrm{b}}
\def\Bcal{\mathcal{B}}
\def\c{\mathrm{c}}
\def\d{\mathrm{d}}
\def\Fcal{\mathcal{F}}
\def\Gcal{\mathcal{G}}
\def\Nbb{\mathbb{N}}
\def\p{\mathrm{p}}
\def\Scal{\mathcal{S}}
\def\s{\mathrm{s}}
\def\var{\mathrm{var}}
\def\Xcal{\mathcal{X}}
\def\Xfk{\mathfrak{X}}
\def\Zcal{\mathcal{Z}}
\title{
Single-cluster PHD filter methods for joint multi-object filtering and parameter estimation
}
\author{\IEEEauthorblockN{Isabel Schlangen\IEEEauthorrefmark{1},
Daniel E. Clark,\IEEEauthorrefmark{1} and
Emmanuel Delande\IEEEauthorrefmark{1}}
\thanks{$^*$ School of Electrical and Physical Sciences,
Heriot-Watt University, Edinburgh EH14 4AS, UK. Email: \{is117, E.D.Delande, D.E.Clark\}@hw.ac.uk. Isabel Schlangen is supported by the Edinburgh Super-Resolution Imaging Consortium (MR/K01563X/1). This work was supported by the EPSRC Platform Grant (EP/J015180/1), and the MoD UDRC Phase 2 (EP/K014227/1).}}
\begin{document}

\maketitle

\begin{abstract}
Many multi-object estimation problems require additional estimation of model or sensor parameters that are either common to all objects or related to unknown characterisation of one or more sensors. Important examples of these include registration of multiple sensors, estimating clutter profiles, and robot localisation. Often these parameters are estimated separately to the multi-object estimation process, which can lead to systematic errors or overconfidence in the estimates. These parameters can be estimated jointly with the multi-object process based only on the sensor data using a single-cluster point process model. This paper presents novel results for joint parameter estimation and multi-object filtering based on a single-cluster second-order \ac{phd} and \ac{cphd} filter. Experiments provide a comparison between the discussed approaches using different likelihood functions.
\end{abstract}

\begin{IEEEkeywords}
PHD filters, SLAM, sensor calibration, parameter estimation
\end{IEEEkeywords}

\IEEEpeerreviewmaketitle

\acresetall
\section{Introduction}
\label{sec:intro}

Methods for detecting and estimating multiple targets from multiple sensors are fundamentally important for sensor fusion applications. Methods based on Finite Set Statistics~\cite{Mahler2007statistical} are a popular choice for developing solutions to multi-sensor fusion applications, including the sub-optimal solutions known as the \ac{phd} filter~\cite{Mahler2003Multitarget} that propagates the first-order moment of a point process, and the \ac{cphd} filter~\cite{Mahler2007PHD} that additionally propagates second-order information through its cardinality distribution. In particular, their Gaussian mixture~\cite{Vo2006Gaussian,Vo2007Analytic,Houssineau2010PHD} and sequential Monte Carlo implementations~\cite{Vo2005Sequential, Ristic2012Adaptive} have been applied for a wide range of applications like tracking maritime craft in live multi-sensor trials~\cite{Barr2013multi} or even tracking dolphin whistles~\cite{Gruden2016Automated}. A recent variation of the \ac{phd} filter, namely the second-order \ac{phd} filter~\cite{Schlangen2017second}, works with generalised model assumptions that allow to propagate the variance~\cite{Delande2014Regional} along with the first-order moment. 

There are many multi-object filtering problems that require additional estimation of parameters which are common to all objects or related to the sensor profile. Examples of these include registration of multiple sensors, estimating clutter profiles, and robot localisation with respect to its surroundings. Typically these parameters are estimated separately to the multi-object estimation process, which can lead to systematic errors or overconfidence in the estimates. This paper addresses specifically this kind of problems by estimating the parameters jointly with the multi-object process based only on the sensor data. Using the \ac{phd}, second-order \ac{phd} and \ac{cphd} filters as a basis for the multi-object estimation, the approach presented in this paper also estimates a parameter common to the multiple targets or related to the sensor configuration. The approach has been further studied for target tracking applications~\cite{Brekke2014novel} and more recently, it has been used for jointly triangulating multiple objects  and calibrating cameras~\cite{Houssineau2016unified} and estimating sensor drift in microscopes~\cite{Schlangen2016Marker} and telescopes~\cite{Hagen2016Joint}, and distributed multi-sensor localisation~\cite{Uney2015Cooperative}.

The approach adopted in this paper was initially developed for group and extended object tracking using hierarchical cluster point processes~\cite{Swain2010Extended, Swain2010First, Swain2012PHD, Clark2013Faa}, restricting the number of groups or extended objects to one~\cite{Swain2011Single}. This enables the modelling of multiple objects that are all conditioned on a single parameter, which could represent, for example, a vehicle position or sensor calibration and registration parameters. The method, known as the single-cluster \ac{phd} filter, has been applied to the problems of sensor calibration~\cite{Ristic2012Particle, Ristic2013Calibration, Ristic2012Calibration} and \ac{slam}~\cite{Lee2013SLAM}. The single-cluster PHD filter approach is distinct from the earlier random finite set method for SLAM developed by Mullane, Vo, Adams et al.~\cite{Mullane2011Random, Adams2014SLAM, Adams2013Circumventing} in that it considers the process to be estimated as a unified point process by making a single approximation and determining the vehicle location based on this approximation; therefore, there is no need to explicitly determine a set of landmarks to update the vehicle position. 

The single-cluster \ac{phd} filter, just like the \ac{phd} filter itself, relies on a Poisson assumption on the multi-target process. While this resulted in algorithms that are able to estimate the number of targets in the presence of false alarms, it has a restricted variance in the estimated target number. This paper develops more accurate and versatile estimators for single-cluster \ac{phd} filtering by considering different processes for modelling the target and clutter processes, in particular the \ac{iid} cluster process used in the \ac{cphd} filter, and binomial, Poisson, and negative binomial processes within the unified Panjer point process used in the recent second-order \ac{phd} filter.

The paper is structured as follows: In Sec.~\ref{sec:slam}, point process theory is quickly reviewed and the single-cluster point process is described with its two-level hierarchy. Section~\ref{sec:threefilters} describes three different single-cluster multi-object filters based on Poisson, Panjer, and \ac{iid} cluster point processes, with proofs given in the appendix. Section~\ref{sec:experiments} presents a detailed statistical analysis of the different methods in simulations, as well as a comparison with the most accurate method based on the  initial \ac{rfs} \ac{slam} formulation, eg.~\cite{Leung2014Evaluating,Leung2016Multifeature}. The paper concludes in section~\ref{sec:conclusion}, and the appendices present the pseudo-code and proofs for the likelihood functions.

\section{Joint multi-object filtering and parameter estimation}
\label{sec:slam}
This section aims to describe the joint multi-object state and parameter estimation in terms of a single-cluster point process framework as described in \cite{Swain2013Group} which carries a two-layer hierarchy. 
Note that in general, there might be interactions between the sensor and its environment, however this case is not considered in this article, i.e.~the dynamical models of the sensor and the targets are assumed to be independent.

\subsection{A short introduction to point processes and functionals}
Let $(\Omega, \Fcal, \mathbb{P})$ be a probability space with sample space $\Omega$, $\sigma$-algebra $\Fcal$, and probability measure $\mathbb{P}$. In the following, all random variables are defined on $(\Omega,\Fcal,\mathbb{P})$. 

A point process $\Phi$ on the state space $\Xcal$ is a random variable on the space $\Xfk = \bigcup_{n\geq 0}\Xcal^n$ of finite sequences of points in $\Xcal$. A realisation of $\Phi$ is a sequence ${\varphi = (x_1,\dots,x_n)\in \Xcal^n}$ which describes a population of $n$ targets with states $x_i \in \Xcal$. One way to describe a point process is by its probability distribution $P_\Phi$ on the measurable space $(\Xfk,\Bcal(\Xfk))$,  $\Bcal(\Xfk)$ being the Borel $\sigma$-algebra of $\Xfk$ \cite{Stoyan1997Stochastic}; the (symmetrical) projection measure $P^\Phi_n$ on the $n$-fold product space $\Xcal^n$ describes realisations with exactly $n$ elements for any natural number $n \geq 0$.  The projection measures facilitate the notation of the \ac{pgfl} of a point process $\Phi$ for any test function $h$:
\begin{equation}
\Gcal_\Phi(h) = \sum_{n\geq 0} \int \left[ \prod_{i=1}^n h(x_i) \right]P^{(n)}_\Phi(\d x_{1:n}). \label{eq:pgfl}
\end{equation}

\subsection{Single-cluster point processes}

Let $\Psi$ be a point process on the sensor state space $\Scal$ with probability distribution $P_\Psi$ describing the evolution of a single sensor and let $\Phi$ denote a point process on the target state space $\Xcal$ describing the evolution of the multi-target configuration. Their \ac{pgfl}s are given by
\begin{align}
G_\Psi(h) &= \int h(s) P_\Psi(s) \d s, \\
\Gcal_\Phi(g) &= \sum_{n\geq 0} \int \left[ \prod_{i=1}^n g(x_i) \right]P^{(n)}_\Phi(\d x_{1:n}).
\end{align}
In order to jointly describe the target process $\Phi$ and the sensor process $\Psi$, we can formulate the joint \ac{pgfl}
\begin{equation}
\begin{split}
&\Gcal_{\Phi,\Psi} (g,h) = G_\Psi(h G_\Phi(g|\cdot))\\
&= \int h(s) \left[\sum_{n\geq 0} \int \left[ \prod_{i=1}^n g(x_i|s) \right]P^{(n)}_\Phi(\d x_{1:n}|s) \right] P_\Psi(s) \d s.
\end{split}
\end{equation}
Therefore, it is of interest to propagate the probability distribution $P_\Psi$ of the sensor process over time, as well as the distribution $P_\Phi$ conditioned on the sensor state $s$. This structure induces a hierarchy on the processes, such that the sensor process $\Psi$ is also referred to as \emph{parent process} and the multi-object process $\Phi$ as \emph{daughter process}.
 
\subsubsection{The parent process}
The parent process estimates the time-varying sensor configuration, e.g.~the sensor position relative to its surroundings or other sensors, the clutter rate etc. The notation $\hat{\cdot}$ will refer to the parent process in this article.
The sensor configuration is assumed to evolve through some Markov transition function $\hat{t}_{k|k-1}$, and the {multi-object likelihood} $\hat{\ell}_k(s|Z)$ describes the sensor configuration based on the estimate of the respective daughter process. The  following Bayes recursion is used to propagate the law $\hat{P}_k$ of the sensor process at time $k$:
\begin{align}
\hat{P}_{k|k-1}(s) = \int \hat{t}_{k|k-1}(s|s')\hat{P}_{k-1}(s') \d s', \label{eq:parentpred}\\
\hat{P}_k(s|Z_k) = \frac{\hat{\ell}_k(Z_k|s)\hat{P}_{k|k-1}(s)}{\int \hat{\ell}_k(Z_k|s')\hat{P}_{k|k-1}(s') \d s'}. \label{eq:parentupdate}
\end{align}
\subsubsection{The daughter process}
The daughter processs estimates the time-varying multi-object configuration $\varphi\in \Xcal^{n_k}$. It is assumed to evolve through some Markov transition function $t_{k|k-1}$ and the multi-measurement/multi-target likelihood $\ell_k$ describes the association likelihood of targets and measurements based on the sensor state $s$. The following Bayes recursion is used to propagate the law $P_k$ of the target process at time $k$: 
\begin{align}
	P_{k|k-1}(\varphi|s) &= \int t_{k|k-1}(\varphi|\varphi',s) P_{k-1}(\varphi'|s)\d\varphi', \label{eq:multibayesfilter1}
	\\
	P_{k}(\varphi|s,Z_k) &= \frac{\ell_k(Z_k|\varphi,s)P_{k|k-1}(\varphi|s)}{\int \ell_k(Z_k|\varphi',s) P_{k|k-1}(\varphi')\d\varphi'}. \label{eq:multibayesfilter2}
\end{align}

In this article, multi-object tracking methods are considered for the daughter process that do not propagate the whole probability distribution but only the low-order moments of the latter which carry the most information, i.e.~the mean and possibly the variance of the process. Three different filters are chosen and described below, namely the first- and second-order \ac{phd} filters \cite{Mahler2003Multitarget,Schlangen2017second} and the \ac{cphd} filter \cite{Mahler2007PHD}. 

\section{Three multi-object filters and their single-cluster multi-object likelihoods}
\label{sec:threefilters}

For all three filters, let $p_{\s, k}(x)$ and $p_{\d, k}(x)$ denote the state-dependent probabilities of survival and detection at a given time $k$. Furthermore, $t_{k|k-1}$ stands for the Markov transition from time $k-1$ to time $k$, and $\ell_k(z|x)$ denotes the single-target association likelihood of measurement $z$ with target $x$ at time~$k$. The birth and clutter intensities at time $k$ will be denoted by $\mu_{\b,k}$ and $\mu_{\c,k}(z)$, respectively.

\subsection{The \ac{phd} filter \cite{Mahler2003Multitarget}}
\label{subsec:phd}

The \ac{phd} filter is the oldest and most widely used of the three multi-object estimation algorithms discussed in this article. It was first introduced in \cite{Mahler2003Multitarget}, and it is based on the assumption that the number of predicted targets, as well as the clutter cardinality, is Poisson distributed. The superscript $\flat$ will be used in the following to refer to the \ac{phd} filter. 

\begin{prop}[\ac{phd} recursion \cite{Mahler2003Multitarget}]
\begin{enumerate}[(a)]
\item The predicted first-order moment measure is given by
\begin{equation}
\label{eq:phdpred}
\mu_{k|k-1}^\flat(B) = \mu_{\b, k}(B) + \mu_{\mathrm{s},k}(B)
\end{equation}
with survival intensity
\begin{equation}
\label{eq:survivalint}
 \mu_{\mathrm{s},k}(B) = \int_\Xcal p_{\s, k}(x)t_{k|k-1}(B|x) \mu_{k-1}^\flat( \d x).
\end{equation}
\item The updated first-order moment measure with Poisson distributed prediction and clutter model is derived as
\begin{equation}
\label{eq:phdupdate}
\begin{split}
\mu_k^\flat(B) 
=&~\mu_{k}^\phi(B)+\sum_{z\in Z_k}  \frac{\mu_k^z(B)}{\mu_{\c,k}(z)+\mu_k^z(\Xcal)}
\end{split}
\end{equation}
with missed detection term 
\begin{equation}
\label{eq:mdterm}
\mu_{k}^\phi(B)= \int_B (1-p_{\d, k}(x)) \mu_{k|k-1}^\flat(\d x)
\end{equation}
and association term
\begin{equation}
\label{eq:assocterm}
\mu_k^z(B) =  \int_B p_{\d, k}(x) {\ell_k(z|x)} \mu_{k|k-1}^\flat(\d x)
\end{equation}
for any measurement $z \in Z_k$.
\end{enumerate}
\end{prop}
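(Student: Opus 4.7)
My plan is to prove both parts via the \ac{pgfl} machinery, which turns point process manipulations into functional calculus on $\mathcal{G}_\Phi(g)$ as defined in \eqref{eq:pgfl}. The key identity I will use repeatedly is that the first-order moment measure satisfies $\mu_\Phi(B) = \frac{\delta}{\delta x} \mathcal{G}_\Phi(g)\big|_{g=1}$ evaluated as an integral against $\mathbf{1}_B$, i.e.\ one functional derivative followed by setting $g\equiv 1$ yields the \ac{phd}.

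For part (a), I start from the Poisson assumption on the posterior at time $k-1$, whose \ac{pgfl} is $\mathcal{G}_{k-1}^\flat(g) = \exp\!\left(\int (g(x)-1)\mu_{k-1}^\flat(\mathrm{d}x)\right)$. The predicted process is the superposition of the birth process and the image of the surviving subprocess under the Markov kernel $t_{k|k-1}$, so by independence the predicted \ac{pgfl} factorises as the product of the birth \ac{pgfl} (also Poisson with intensity $\mu_{\b,k}$) and the surviving \ac{pgfl}. The latter, obtained from the standard thinning-plus-displacement identity, equals $\exp\!\left(\int \big(p_{\s,k}(x)\int g(y)\, t_{k|k-1}(\mathrm{d}y|x) - p_{\s,k}(x)\big)\mu_{k-1}^\flat(\mathrm{d}x)\right)$. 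Differentiating once at $g\equiv 1$ and integrating against $\mathbf{1}_B$ gives \eqref{eq:phdpred}--\eqref{eq:survivalint} by additivity of the derivative over the exponent.

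For part (b), I form the joint \ac{pgfl} of the predicted target process and the measurement process. A target at $x$ is either missed (with probability $1-p_{\d,k}(x)$) or produces a single measurement drawn from $\ell_k(\cdot|x)$, so conditional on the targets, the measurement process is a superposition of independent Bernoulli clusters plus independent Poisson clutter. This gives
\begin{equation*}
\mathcal{F}(g,h) = \mathcal{G}_{c,k}(h)\,\mathcal{G}_{k|k-1}^\flat\!\Big(g(\cdot)\big[1-p_{\d,k}(\cdot) + p_{\d,k}(\cdot)\!\!\int\!\!h(z)\ell_k(z|\cdot)\mathrm{d}z\big]\Big),
\end{equation*}
with $\mathcal{G}_{c,k}(h)=\exp(\int(h(z)-1)\mu_{\c,k}(\mathrm{d}z))$. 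Bayes' rule at the level of functionals states that the updated target \ac{pgfl} given observations $Z_k=\{z_1,\dots,z_m\}$ equals the ratio of the $m$-fold functional derivative of $\mathcal{F}$ in $h$ evaluated at $h\equiv 0$ to the same derivative of $\mathcal{F}(1,\cdot)$. One further derivative in $g$ at $g\equiv 1$ then extracts $\mu_k^\flat$.

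The heart of the calculation, and the main obstacle, is managing the $m$ functional derivatives in $h$ of the composite exponential-of-integral in a way that collapses to a sum over individual measurements rather than a sum over all partitions of $Z_k$. Because the outer function is Poisson-exponential, each $h$-derivative either acts on the clutter exponent, producing a factor $\mu_{\c,k}(z_j)$, or on the target exponent, producing a factor $\int p_{\d,k}(x)\ell_k(z_j|x)g(x)\,\mu_{k|k-1}^\flat(\mathrm{d}x)$; the exponential structure means each measurement contributes a single independent factor, and the resulting product expands into a sum indexed by subsets of $Z_k$ which, upon dividing by the normaliser $\mathcal{F}(1,\cdot)$, collapses exactly to $\prod_{z\in Z_k}(\mu_{\c,k}(z)+\mu_k^z(\mathcal{X}))$ in the denominator. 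Differentiating once in $g$ and using the product rule on this expression produces the missed-detection term \eqref{eq:mdterm} from the factor outside the bracket and the per-measurement sum \eqref{eq:phdupdate}--\eqref{eq:assocterm} from the Leibniz contributions, after cancellation with the normaliser.
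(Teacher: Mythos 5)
Your derivation is correct. The paper does not prove this proposition itself---it is imported verbatim from the cited reference---but your \ac{pgfl} argument (superposition of the Poisson birth process with the thinned-and-displaced surviving process for the prediction; $m$-fold functional differentiation in $h$ of the joint observation \ac{pgfl} at $h=0$, normalisation, and a single $g$-derivative at $g=1$ for the update) is exactly the standard derivation of the \ac{phd} recursion and uses the same chain/product-rule machinery the paper deploys in Appendix~A for the likelihood theorems, so there is nothing to flag beyond the cosmetic point that the per-measurement factors $\mu_{\c,k}(z)+\int g\,p_{\d,k}\ell_k(z|\cdot)\,\mu_{k|k-1}^\flat(\d x)$ already appear as a product (the exponent being affine in $h$) and need not be expanded over subsets of $Z_k$.
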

In context of \ac{slam} and parameter estimation, the \ac{phd} can be easily utilised for multi-object estimation using the following multi-object likelihood function \cite{Swain2013Group}:
\begin{thm}[\ac{phd} multi-object likelihood \cite{Swain2013Group}] 
The likelihood function of the \ac{phd} filter for a given sensor state $s$ is found to be
\begin{equation}
\label{eq:likelihood_phd}
\begin{split}
&\ell_k^\flat(s|Z) \\
&=\cfrac{ \prod_{z\in Z}\left[\mu_{\c,k}(z|s) +\int_\Xcal p_{\d,k}(x|s)\ell_k(z|x,s)\mu_{k|k-1}^\flat(\d x|s)\right]}{\exp\left[\int_\Zcal \mu_{\c,k}(z|s)\d z + \int_\Xcal p_{\d,k}(x|s)\mu_{k|k-1}^\flat(\d x|s) \right]}.
\end{split}
\end{equation}
\end{thm}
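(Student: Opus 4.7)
The plan is to derive $\ell_k^\flat(s|Z)$ by marginalising the daughter point process $\Phi$ out of the joint target/measurement process, conditioned on the parent state $s$. Since every object-dependent quantity below implicitly carries $s$, I will suppress it in the sketch and restore it at the end.

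First, I would write the \ac{pgfl} of the predicted daughter process. Under the Poisson assumption of Sec.~\ref{subsec:phd},
\begin{equation*}
\Gcal_{k|k-1}^\flat(g) = \exp\!\left(\int g(x)\mu_{k|k-1}^\flat(\d x) - \mu_{k|k-1}^\flat(\Xcal)\right).
\end{equation*}
Then I would build the bivariate \ac{pgfl} of the joint (target, observation) process. The observation process is the superposition of Poisson clutter with intensity $\mu_{\c,k}$ and the thinned/shifted image of $\Phi$: every target $x$ is independently detected with probability $p_{\d,k}(x)$ and, if detected, produces a measurement distributed as $\ell_k(\cdot|x)$. A standard \ac{pgfl} calculation (thinning plus superposition) gives
\begin{equation*}
\Fcal(g,h) = \Gcal_{k|k-1}^\flat\!\bigl(g\cdot[1-p_{\d,k}+p_{\d,k}\,L h]\bigr)\,\Gcal_\c(h),
\end{equation*}
where $(Lh)(x) = \int h(z)\ell_k(z|x)\d z$ and $\Gcal_\c(h)=\exp(\int(h(z)-1)\mu_{\c,k}(z)\d z)$.

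Second, I would marginalise over the target process by setting $g\equiv 1$, yielding the observation \ac{pgfl}
\begin{equation*}
\Gcal_Z(h) = \exp\!\left(\int h(z)\nu_k(z)\d z - \int\nu_k(z)\d z\right),
\end{equation*}
with $\nu_k(z) := \mu_{\c,k}(z) + \int p_{\d,k}(x)\ell_k(z|x)\mu_{k|k-1}^\flat(\d x)$, after expanding the Poisson exponential and collecting the constant and $h$-linear terms. This identifies $Z$ as a Poisson point process with intensity $\nu_k$.

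Finally, the likelihood of observing the finite set $Z$ under a Poisson process with intensity $\nu_k$ is the standard Janossy density
\begin{equation*}
\ell_k^\flat(Z) = \exp\!\left(-\int \nu_k(z)\d z\right)\prod_{z\in Z}\nu_k(z),
\end{equation*}
and splitting $\int\nu_k = \int\mu_{\c,k}(z)\d z + \int p_{\d,k}(x)\mu_{k|k-1}^\flat(\d x)$ reproduces \eqref{eq:likelihood_phd} once the dependence on $s$ is restored. The main obstacle I anticipate is the \ac{pgfl} bookkeeping in step one: the thinning/superposition argument must correctly account for the fact that undetected targets contribute $1-p_{\d,k}$ to the argument of $\Gcal_{k|k-1}^\flat$ while detected ones contribute $p_{\d,k}\,Lh$, and I would either justify this by a direct moment-generating calculation on the Poisson process or by invoking the standard marking/thinning result for \ac{pgfl}s before collapsing to the simple exponential form.
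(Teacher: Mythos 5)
Your proposal is correct and follows essentially the same route as the paper: both form the observation \ac{pgfl} as the composition of the Poisson predicted-target \ac{pgfl} with the Bernoulli detection \ac{pgfl}, multiplied by the Poisson clutter \ac{pgfl}, and collapse it to a single exponential in $h$ with intensity $\nu_k(z)=\mu_{\c,k}(z|s)+\int p_{\d,k}(x|s)\ell_k(z|x,s)\mu_{k|k-1}^\flat(\d x|s)$. The only cosmetic difference is that you identify the marginal observation process as Poisson and quote its Janossy density, whereas the paper obtains the same expression by repeated chain-rule differentiation of the \ac{pgfl} in the directions $\delta_z$, $z\in Z$, evaluated at $g=0$.
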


\subsection{The second-order \ac{phd} filter \cite{Schlangen2017second}}
\label{subsec:panjer}
The second-order \ac{phd} filter was introduced in \cite{Schlangen2017second} as an extension of the \ac{phd} filter, including second-order information by propagating the variance in the target number. It uses the assumption that the predicted target process and the clutter process are Panjer distributed as described in \cite{klugman2012loss,Schlangen2017second}, which generalises the Poisson distribution. It is therefore less restrictive, and it does not require the computationally expensive propagation of the whole cardinality distribution like in the \ac{cphd} filter. Instead, the Panjer distribution is completely characterised by two parameters which stand in direct correspondence with its mean and variance. This makes it possible to propagate both the mean and variance of the target cardinality in the second-order \ac{phd} recursion, leading to a very similar structure like the \ac{cphd} filter below. The notation $\natural$ will refer to the second-order \ac{phd} filter.

Recall the \emph{Pochhammer symbol} or \emph{rising factorial} $(\zeta)_n$ for any $\zeta\in \mathbb{R}$ and $n\in\mathbb{N}$:
\begin{equation}
\label{eq:pochhammer}
(\zeta)_n := \zeta(\zeta+1)\cdots (\zeta+n-1),\quad(\zeta)_0 :=1.
\end{equation}
Moreover, let $\alpha_{k|k-1},\beta_{k|k-1}$ and $\alpha_{\c,k},\beta_{\c,k}$ be the parameters of the predicted target and clutter processes at time $k$, respectively. Define the terms
\begin{equation}
\label{eq:ypsilon}
\begin{split}
Y_{u}(Z) := \sum_{j=0}^{|Z|} \frac{(\alpha_{k|k-1})_{j+u}}{(\beta_{k|k-1})^{j+u}} \frac{(\alpha_{\c, k})_{|Z|-j}}{(\beta_{\c, k} + 1)^{|Z|-j}}F_\d^{-j-u}e_{j}(Z)
\end{split}
\end{equation}
for any $Z \subseteq Z_k$, where $F_\d$ is the scalar given by
\begin{equation}
F_\d := \int \left[1 + \frac{p_{\d, k}({x})}{\beta_{k|k-1}}\right]\mu_{k|k-1}^\natural(\d{x}),
\end{equation}
and $e_j$ (cf.~Eq.~\eqref{eq:esf})
\begin{equation}
e_j(Z) := \sum_{\substack{{Z'}\subseteq Z\\|{Z'}|=j}}\prod_{z\in {Z'}}\frac{\mu_k^z(\Xcal)}{s_{\c, k}(z)},
\label{eq:esf2}
\end{equation}
where $s_{\c,k}$ denotes the spatial clutter distribution at time $k$ and the association term is defined as in \eqref{eq:assocterm} but using $\mu^\natural_{k|k-1}$ instead of $\mu^\flat_{k|k-1}$.
Furthermore, define the expression $l_d^\natural$ for $d = 1,2$ via
\begin{align}
l^\natural_d(\phi) :=  \cfrac{Y_d(Z_k)}{Y_0(Z_k)} \hspace{0.3cm}\text{and}\hspace{0.3cm}l^\natural_d(z) := \cfrac{Y_d(Z_k \backslash \{z\})}{Y_0(Z_k)}.
\end{align}
In a similar manner, define
\begin{equation}
l_2^{\natural}(z,z') := \left\{ \begin{array}{ll} 
 \cfrac{Y_2(Z_k \backslash \{z,z'\})}{Y_0(Z_k)} & \mathrm{if}\hspace{0.3cm}z\neq z',\\
 0 & \mathrm{otherwise}.
\end{array}\right.
\end{equation}
The prediction of the variance involves the second-order factorial moment  $\nu_{k}^{(2)}$ which, in general, cannot be retrieved from the predicted information $\mu_{k|k-1}^\natural,\var_{k|k-1}^\natural$ only. The assumption that $p_{\s,k}(x) = p_{\s,k}$ shall be uniform for all $x$ in the state space $\Xcal$, however, leads to the following closed-form recursion.

\begin{prop}[Second-order \ac{phd} recursion \cite{Schlangen2017second}]
\textcolor{white}{.}\\\vspace{-0.5cm}
\begin{enumerate}[(a)]
\item Assume that $p_{\s,k}(x) = p_{\s,k}$ is constant for all $x\in \Xcal$ at time $k$. In the manner of \eqref{eq:phdpred} and \eqref{eq:cphdpred}, the predicted first-order moment measure of the Panjer filter is given by
\begin{equation}
\label{eq:panjerpred}
\mu_{k|k-1}^\natural(B) = \mu_{\b, k}(B) + \mu_{\mathrm{s},k}(B),
\end{equation}
using $\mu_{k-1}^\natural$ instead of $\mu_{k-1}^\flat$ in \eqref{eq:survivalint}.
The predicted variance in the whole state space $\Xcal$ is given by
\begin{equation}
 \label{eq:varpred}
\var_{k|k-1}^\natural(\Xcal) = \var_{\b, k}(\Xcal) + \var_{\s, k}(\Xcal),
\end{equation}
where $\var_{\b, k}$ denotes the variance of the birth process and $\var_{\s, k}$ is the variance of the predicted process concerning the surviving targets which is found to be
\begin{equation}
\label{eq:var_s_assumption}
 \var_{\s, k}(\Xcal) = p_{\s,k}^2\var_{k-1}(\Xcal) + p_{\s,k}[1- p_{\s, k}]\mu_{k-1}(\Xcal).
\end{equation}
\item Find $\alpha_{k|k-1}$ and $\beta_{k|k-1}$ using
\begin{align}
\alpha_{k|k-1} &= \cfrac{\mu_{k|k-1}^\natural(\Xcal)^2}{\var_{k|k-1}^\natural(\Xcal)-\mu_{k|k-1}^\natural(\Xcal)},\label{eq:alpha}\\
\beta_{k|k-1} &= \cfrac{\mu_{k|k-1}^\natural(\Xcal)}{\var_{k|k-1}^\natural(\Xcal)-\mu_{k|k-1}^\natural(\Xcal)}.\label{eq:beta}
\end{align}
Then, the updated first-order moment measure becomes
\begin{equation}
\label{eq:panjerupdate}
\mu_k^\natural(B) =\hat{\mu}_{k}^\phi(B)\hat{l}_1(\phi)+\sum_{z\in Z_k}~\hat{\mu}_{k}^z(B) \hat{l}_1(z).
\end{equation}
The updated variance is obtained with
\begin{equation}
\label{eq:varupdate}
\begin{split}
&\var_k^\natural(B)=\mu_k^\natural(B)
+\hat{\mu}_k^\phi(B)^2\left[\hat{l}_2(\phi)-\hat{l}_1(\phi)^2\right] \\
&+2\hat{\mu}_k^\phi(B)\sum_{z\in Z_k}~\hat{\mu}_k^z(B)\left[\hat{l}_2(z)-\hat{l}_1(\phi)\hat{l}_1(z)\right]\\
&+\sum_{z,z'\in Z_k}\hat{\mu}_k^z(B)\hat{\mu}_k^{z'}(B)\left[ \hat{l}_2^{\neq}(z,z')-\hat{l}_1(z)\hat{l}_1(z')\right].
\end{split}
\end{equation}
\end{enumerate}
\end{prop}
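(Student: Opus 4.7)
My plan is to work with the probability generating functional framework of Sec.~\ref{sec:slam} and obtain the first-order moment and the variance as functional derivatives of the appropriate PGFLs evaluated at $h\equiv 1$. Recall that for a point process $\Phi$ with PGFL $\Gcal_\Phi$, the intensity measure satisfies $\mu(B)=\tfrac{\delta}{\delta h}\Gcal_\Phi(h)[\mathbf{1}_B]\big|_{h=1}$ and the variance on $\Xcal$ is recovered from the second-order factorial moment via $\var(\Xcal)=\mu^{(2)}(\Xcal\times\Xcal)+\mu(\Xcal)-\mu(\Xcal)^2$. The Panjer assumption fixes the cardinality PGF to $G_N(z)=(\beta/(\beta+1-z))^\alpha$, whose first two factorial moments are $\alpha/\beta$ and $\alpha(\alpha+1)/\beta^2$; this is the analytic workhorse that converts PGFL manipulations into the Pochhammer-symbol quantities $Y_u$.

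For the prediction part (a), I would note that the predicted process is the superposition of the independent birth and surviving processes, so its PGFL factorises as $\Gcal_{k|k-1}^\natural(h)=\Gcal_{\b,k}(h)\Gcal_{\s,k}(h)$. Taking a first functional derivative yields the additive decomposition $\mu_{k|k-1}^\natural=\mu_{\b,k}+\mu_{\s,k}$, and substituting $\Gcal_{\s,k}(h)=\Gcal_{k-1}^\natural(1-p_{\s,k}+p_{\s,k}\int t_{k|k-1}(\cdot|x)h)$ with constant $p_{\s,k}$ reduces the surviving intensity to \eqref{eq:survivalint}. For \eqref{eq:var_s_assumption}, the constant-$p_{\s,k}$ hypothesis is crucial: Bernoulli thinning of an arbitrary point process with a state-independent probability $p$ multiplies the first factorial moment by $p$ and the second by $p^2$, so $\var_{\s,k}(\Xcal)=p_{\s,k}^2\mu_{k-1}^{(2)}(\Xcal^2)+p_{\s,k}\mu_{k-1}(\Xcal)-p_{\s,k}^2\mu_{k-1}(\Xcal)^2$, and expressing $\mu_{k-1}^{(2)}(\Xcal^2)$ through $\var_{k-1}(\Xcal)+\mu_{k-1}(\Xcal)^2-\mu_{k-1}(\Xcal)$ collapses this to the stated formula. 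Independence of birth and survival then gives \eqref{eq:varpred}.

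For the update part (b), the expressions \eqref{eq:alpha}--\eqref{eq:beta} are simply the inversion of the Panjer mean/variance formulas $\mu=\alpha/\beta$ and $\var-\mu=\alpha/\beta^{2}$. The heart of the update is the posterior PGFL $\Gcal_k^\natural(h\mid Z_k)$, obtained from Bayes' rule on point processes (cf.~\cite{Mahler2007statistical}) applied with independent Panjer target and Panjer clutter components together with a Bernoulli detection/association model. Distributing the sum over data-to-target association hypotheses and using the symmetric-function identity for $e_j$ produces, after collecting the cardinality sums into the closed form $(\alpha)_{j+u}/\beta^{j+u}$, the expression whose normaliser is $Y_0(Z_k)$ and whose $(u{+}1)$-th factorial cumulant involves $Y_u(Z_k)$ and $Y_u(Z_k\setminus\{z\})$; this is exactly what the shorthand $\hat{l}_1,\hat{l}_2,\hat{l}_2^{\neq}$ encodes. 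Differentiating once at $h=1$ and collecting the missed-detection term $\hat\mu_k^\phi$ and the association terms $\hat\mu_k^z$ gives \eqref{eq:panjerupdate}.

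The main obstacle will be the variance update \eqref{eq:varupdate}. A second functional derivative of the posterior PGFL produces four qualitatively different contributions: (i) the diagonal from the missed-detection branch, contributing $(\hat\mu_k^\phi)^2[\hat{l}_2(\phi)-\hat{l}_1(\phi)^2]$; (ii) cross terms between the missed-detection branch and each measurement-associated branch; (iii) diagonal measurement--measurement contributions which, because a single measurement cannot be assigned twice, must be removed (hence the strict-inequality $\hat l_2^{\neq}$); and (iv) the $-\mu^2+\mu$ correction converting the second factorial moment to the variance, which recovers the leading $+\mu_k^\natural(B)$. The delicate bookkeeping is to verify that the combinatorial sums over $Z$ rearrange into ratios $Y_u(\cdot)/Y_0(Z_k)$ and that the $z=z'$ diagonal is correctly excluded; this is the step I expect to consume most of the effort, and I would handle it by expanding $(Y_0(Z_k))^{-2}$ times the double sum and checking term-by-term against the $e_j$ identities, mirroring the corresponding CPHD variance derivation.
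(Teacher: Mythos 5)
A point of order first: this paper does not actually prove the proposition you were asked about. It is recalled, with a citation, from the reference on the second-order PHD filter, and Appendix~A of the present paper only proves the three multi-object \emph{likelihood} theorems, not the recursion propositions. There is therefore no in-paper proof to compare your attempt against; what follows judges your plan against the standard derivation the citation points to and against the PGFL machinery the paper does develop in its appendix.

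Your plan follows that standard route, and the parts you actually carry out are correct. The identity $\var(\Xcal)=\nu^{(2)}(\Xcal\times\Xcal)+\mu(\Xcal)-\mu(\Xcal)^2$, the factorisation of the predicted PGFL into independent birth and survival factors giving \eqref{eq:panjerpred} and \eqref{eq:varpred}, the Bernoulli-thinning argument for \eqref{eq:var_s_assumption} (where you rightly identify that the constant-$p_{\s,k}$ assumption is what makes the second factorial moment scale by $p_{\s,k}^2$ and lets $\nu_{k-1}^{(2)}(\Xcal^2)$ be eliminated in favour of $\var_{k-1}(\Xcal)$ and $\mu_{k-1}(\Xcal)$), and the inversion of $\mu=\alpha/\beta$, $\var-\mu=\alpha/\beta^2$ yielding \eqref{eq:alpha}--\eqref{eq:beta} all check out. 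For part (b), however, what you have is a plan rather than a proof: the first- and second-order functional derivatives of the posterior PGFL, the emergence of the $Y_u$ ratios as $\hat l_1,\hat l_2,\hat l_2^{\neq}$, and in particular the term-by-term verification of the variance update \eqref{eq:varupdate} --- which you yourself flag as the bulk of the work --- are described but not executed. Since that bookkeeping is exactly where errors tend to creep in (the exclusion of the $z=z'$ diagonal, and the cancellations that leave the leading $+\mu_k^\natural(B)$ term), the proposal as written establishes part (a) but only outlines part (b). The outline is the right one, though: it mirrors the general product rule \eqref{eq:generalproductrule} and chain rule \eqref{eq:generalchainrule} that the paper itself deploys in Appendix~A to prove the closely related likelihood \eqref{eq:likelihood_panjer}, whose value is precisely the normaliser $Y_0(Z_k)$ by which your corrector terms are divided.
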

\begin{thm}[Second-order \ac{phd} likelihood]
Write $\alpha = \alpha_{k|k-1}$ and $\beta = \beta_{k|k-1}$ for the sake of brevity, and let 
\begin{align}
\tilde{F}_{\d,s} &= 1-\frac{1}{\beta}\int_\Xcal p_{\d,k}(x|s) s_{k|k-1}(\d x, s),\\
F_\c &= 1+\frac{1}{\beta_\c}
\end{align}
for a given sensor state $s$. The multi-object likelihood function of the Panjer \ac{phd} filter for $s$ is found to be
\begin{equation}
\label{eq:likelihood_panjer}
\begin{split}
&\ell_k^\natural(s|Z) =\sum_{j=0}^{|Z|} \frac{(\alpha)_{j}}{(\beta)^{j}} \frac{(\alpha_{\c, k})_{|Z|-j}}{(\beta_{\c, k} + 1)^{|Z|-j}}\\
&\cdot\tilde{F}_{\d,s}^{-\alpha-j}F_\c^{-\alpha_\c-|Z|-j} \sum_{\substack{{Z'}\subseteq Z\\|{Z'}|=j}}\prod_{z\in {Z'}}{\hat{\mu}_k^z(\Xcal)} \prod_{z'\in (Z')^c}{\mu_{\c,k} (z|s)}.
\end{split}
\end{equation}
\end{thm}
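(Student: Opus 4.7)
The plan is to extract the likelihood from the joint observation \ac{pgfl} of the target and clutter processes, following the same \ac{fisst} recipe that produces the \ac{phd} likelihood of Sec.~\ref{subsec:phd} but with the Poisson PGFLs replaced by their Panjer counterparts.

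First I would write the joint observation PGFL explicitly. Since both the predicted target and clutter processes are Panjer-distributed \ac{iid} cluster processes with spatial densities $s_{k|k-1}(\cdot|s)$ and $s_{\c,k}(\cdot|s)$, their PGFLs take the closed form
\[
\mathcal{G}_\Phi(g\mid s) = \left(\frac{\beta}{\beta + 1 - \int g(x)\, s_{k|k-1}(\d x, s)}\right)^{\!\alpha},
\]
and analogously for the clutter process with $(\alpha_\c, \beta_\c)$. With the usual single-target measurement functional
\[
\phi[h](x) := 1 - p_{\d,k}(x|s) + p_{\d,k}(x|s)\int h(z)\ell_k(z|x,s)\,\d z,
\]
the joint observation PGFL reads $\mathcal{F}(h\mid s) = \mathcal{G}_\Phi(\phi[h]\mid s)\cdot \mathcal{G}_K(h\mid s)$, and the likelihood is recovered from the standard \ac{fisst} identity $\ell_k^\natural(s\mid Z) \propto \delta^{|Z|}\mathcal{F}(h\mid s)/\delta Z$ evaluated at $h=0$.

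Next I would compute this functional derivative. Because $\mathcal{F}(h\mid s)$ factors into two branches, each a scalar function of a linear functional of $h$, the generalized Leibniz and chain rules decompose the $|Z|$-th derivative into a sum over partitions $Z = Z'\sqcup(Z')^c$, with the subset $Z'$ of size $j$ routed through the target branch and its complement through the clutter branch. The $j$-th chain-rule derivative of $\mathcal{G}_\Phi$ produces (i) the rising factorial $(\alpha)_j$ from differentiating the outer power, (ii) a power of the denominator $\beta + 1 - \int \phi[h]\, s_{k|k-1}$ which, at $h=0$, matches $\tilde F_{\d,s}^{-(\alpha+j)}$, and (iii) a product of factors $\delta\phi[h](x)/\delta z|_{h=0} = p_{\d,k}(x|s)\ell_k(z|x,s)$ that, upon integration against $s_{k|k-1}(\cdot|s) = \mu_{k|k-1}^\natural(\cdot|s)/M$, contribute $\hat\mu_k^z(\Xcal)/M$ for each $z \in Z'$. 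The resulting $M^j$ in the denominator combines with $\beta^j$ and $(\alpha)_j$ to produce the Pochhammer ratio $(\alpha)_j/\beta^j$ of \eqref{eq:likelihood_panjer}. The same computation on the clutter branch yields $(\alpha_\c)_{|Z|-j}/(\beta_\c+1)^{|Z|-j}$, the $F_\c$-dependent normalization, and the product $\prod_{z'\in(Z')^c}\mu_{\c,k}(z'|s)$.

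Summing over $j = 0,\dots,|Z|$ and over subsets $Z'\subseteq Z$ of size $j$, and reorganizing the $(\beta_\c+1)^{-\alpha_\c}$-type constants into the powers of $F_\c$ specified in the statement, reproduces exactly \eqref{eq:likelihood_panjer}. The hardest part is the bookkeeping in the chain-rule step: one must track the Pochhammer factors, the interaction of the normalized spatial density $s_{k|k-1}$ with the unnormalized intensity $\mu_{k|k-1}^\natural$ throughout the derivative, and the precise definitions of $\tilde F_{\d,s}$ and $F_\c$. The final collapse of the partition sum into the elementary-symmetric form in $\hat\mu_k^z(\Xcal)$ and $\mu_{\c,k}(z|s)$ is then routine.
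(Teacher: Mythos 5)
Your proposal follows essentially the same route as the paper's proof: write the joint observation PGFL as the product of the Panjer target branch composed with the Bernoulli detection functional and the Panjer clutter branch, apply the general product rule to split $Z$ into a target-associated subset $Z'$ and its clutter complement, and evaluate the chain-rule derivatives of each branch at $g=0$ to obtain the rising factorials, the powers of $\tilde F_{\d,s}$ and $F_\c$, and the products over $\hat{\mu}_k^z(\Xcal)$ and $\mu_{\c,k}(z|s)$. Your rewriting of the Panjer PGFL as a normalized ratio is algebraically equivalent to the form \eqref{eq:panjer} used in the appendix, and your explicit flagging of the normalized-versus-unnormalized spatial density bookkeeping matches (and is arguably more careful than) the paper's own treatment.
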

\begin{proof}
See appendix.
\end{proof}

\subsection{The \ac{cphd} filter \cite{Mahler2007PHD}}
\label{subsec:cphd}
The \ac{cphd} filter was introduced by Mahler in \cite{Mahler2007PHD} after the need of a filter which propagates higher-order information was expressed in \cite{Erdinc2005Probability}. Instead of taking a particular assumption on the nature of the cardinality distribution, this filter estimates the latter alongside with the intensity of the point process. The notation $\sharp$ is used below to refer to the \ac{cphd} filter.

Let $\rho_{k}$ denote the cardinality distribution of the target population at time $k$, and let $\rho_\b$ and $\rho_\c$ denote the birth and clutter cardinality distributions, respectively. In comparison to \eqref{eq:phdupdate}, the \ac{cphd} filter update has additional factors $\l_d$ which depend on the cardinality distribution. First of all, recall the inner product
\begin{align}
\langle f , g \rangle &= \int f(x) g(x) \d (x) \quad \text{(continuous case)},\label{eq:innerproduct_c}\\
\langle f , g \rangle &= \sum_{n\geq 0} f(n) g(n) \hspace{0.81cm} \text{(discrete case)}. \label{eq:innerproduct_d}
\end{align}
Following the notation in \cite{Vo2007Analytic}, define the terms $\Upsilon^d[\mu,Z]$ via
\begin{equation}
\label{eq:Ycphd}
\begin{split}
\Upsilon^d[\mu,Z](n) &= \hspace{-0.2cm}\sum_{j=0}^{\min(|Z|,n-u)}\hspace{-0.2cm} \frac{n!(|Z|-j)!}{(n-(j+d))!}\\
& \hspace{0.5cm}\cdot\rho_\c(|Z|-j)\frac{\mu_{k}^\phi(\Xcal)^{n-(j+d)}}{\mu_{k|k-1}^\sharp(\Xcal)^n}e_j(Z),
\end{split}
\end{equation}
where 
\begin{equation}
\label{eq:esf}
e_j(Z) := \sum_{\substack{{Z'}\subseteq Z\\|{Z'}|=j}}\prod_{z\in {Z'}}\frac{\mu_k^z(\Xcal)}{\mu_{\c,k} (z)}.
\end{equation}
This leads to the terms
\begin{align}
l_1(\phi) &= \frac{\langle \Upsilon^1[\mu,Z], \rho_{k|k-1} \rangle}{\langle \Upsilon^0[\mu,Z], \rho_{k|k-1} \rangle},\label{eq:ltermphi}\\
l_1(z) &= \frac{\langle \Upsilon^1[\mu,Z\setminus\{z\}], \rho_{k|k-1} \rangle}{\langle \Upsilon^0[\mu,Z], \rho_{k|k-1} \rangle}. \label{eq:ltermz}
\end{align}
As mentioned before, the \ac{cphd} filter recursion involves both the intensity measure and the cardinality distribution. The notation below is inspired by \cite{Vo2007Analytic} and \cite{Delande2014Regional}.

\begin{prop}[\ac{cphd} recursion  \cite{Mahler2007PHD}]
\begin{enumerate}[(a)]
\item Similarly to \eqref{eq:phdpred}, the predicted first-order moment measure is given by
\begin{equation}
\label{eq:cphdpred}
\mu_{k|k-1}^\sharp(B) = \mu_{\b, k}(B) + \mu_{\mathrm{s},k}(B),
\end{equation}
using $\mu_{k-1}^\sharp$ instead of $\mu_{k-1}^\flat$ in \eqref{eq:survivalint}.
The predicted target cardinality distribution is found to be
\begin{equation}
\label{eq:cardpred}
\rho_{k|k-1}(n) = \sum_{j=0}^n \rho_\b(n-j) S[\mu_{k-1}^\sharp,\rho_{k-1}](j)
\end{equation}
for any $n \in \mathbb{N}$ with
\begin{equation}
S[\mu,\rho](j) =  \sum_{l=j}^\infty {l \choose j}\frac{\langle p_{\s,k},\mu \rangle^j \langle(1- p_{\s,k}),\mu \rangle^{l-j}}{\langle 1,\mu \rangle^l} \rho(l).
\end{equation}
\item The updated first-order moment measure is given by
\begin{equation}
\label{eq:cphdupdate}
\mu_k^\sharp(B) =\mu_{k}^\phi(B)l_1(\phi)+\sum_{z\in Z_k}~\frac{\mu_{k}^z(B)}{\mu_{\c,k}(z)} l_1(z)
\end{equation}
with missed detection term \eqref{eq:mdterm} and association term \eqref{eq:assocterm} using $\mu^\sharp_{k|k-1}$ instead of $\mu^\flat_{k|k-1}$. The updated target cardinality distribution is given by
\begin{equation}
\rho_k(n) = \frac{ \Upsilon^0[\mu_{k|k-1}^\sharp,Z](n) \rho_{k|k-1}(n)}{\langle \Upsilon^0[\mu_{k|k-1}^\sharp,Z], \rho_{k|k-1} \rangle}
\end{equation}
for any $n \in \mathbb{N}$.
\end{enumerate}
\end{prop}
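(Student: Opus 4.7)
The plan is to derive both parts of the \ac{cphd} recursion from the \ac{pgfl} of an \ac{iid} cluster process, which admits the factorised form $\Gcal(h) = G_\rho(\langle h, s \rangle)$, with $G_\rho$ the probability generating function of the cardinality and $s$ the normalised spatial density. Since an \ac{iid} cluster process is fully characterised by the pair $(\rho, s)$, equivalently $(\rho, \mu)$ via $\mu = \langle 1, \mu\rangle s$, the recursion need only specify how this pair transforms under prediction and update, and both operations can be read off by functional differentiation of the appropriate \ac{pgfl}.

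For part (a), I would express the predicted \ac{pgfl} as the product of an independent birth \ac{pgfl} and a surviving-transition \ac{pgfl}. Each pre-existing target is thinned by $p_{\s,k}$ and then relocated through $t_{k|k-1}$, yielding the surviving-process \ac{pgfl} $G_{\rho_{k-1}}\!\bigl(\langle (1-p_{\s,k}) + p_{\s,k}\!\int h(y)\, t_{k|k-1}(\d y|\cdot), s_{k-1}\rangle\bigr)$. A first-order functional derivative at $h=1$ reproduces \eqref{eq:cphdpred} by linearity of the superposition; setting $h \equiv z$ turns the predicted \ac{pgfl} into the product of the probability generating functions of the birth cardinality and the binomially-thinned surviving cardinality, and reading off the coefficient of $z^n$ gives the convolution \eqref{eq:cardpred}, with $S[\mu_{k-1}^\sharp, \rho_{k-1}](j)$ emerging after expanding the binomial thinning conditional on $\rho_{k-1}$.

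For part (b), I would form the joint target--measurement \ac{pgfl} $\Fcal(g,h) = \Gcal_{k|k-1}^\sharp\!\bigl(h \cdot [1 - p_{\d,k} + p_{\d,k}\!\int g(z)\ell_k(z|\cdot)\,\d z]\bigr)\cdot \Gcal_\c(g)$, with $\Gcal_\c$ the \ac{iid} cluster clutter \ac{pgfl}. Bayes' rule writes the posterior \ac{pgfl} as the $|Z_k|$-fold functional derivative of $\Fcal$ in $g$ at $g=0$, normalised by the evidence; a further functional derivative in $h$ at $h=1$ yields the posterior intensity \eqref{eq:cphdupdate}, while substituting $h \equiv z$ and extracting the coefficient of $z^n$ yields the posterior cardinality. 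Both computations reduce to a Leibniz-type differentiation of a product of $|Z_k|+1$ factors; grouping terms according to whether each measurement is attributed to a target or to clutter produces the elementary symmetric functions $e_j(Z_k)$ of \eqref{eq:esf} together with powers of $\mu_k^\phi(\Xcal)/\mu_{k|k-1}^\sharp(\Xcal)$, which assemble precisely into the kernel $\Upsilon^d[\mu, Z](n)$ of \eqref{eq:Ycphd}.

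The hard part will be the combinatorial bookkeeping in the update: one must track how the $n$-th cardinality-generating derivative in the auxiliary variable interacts with the $|Z_k|$-fold derivative in $g$, consolidate the resulting factorial coefficients into the kernel $n!(|Z|-j)!/(n-(j+d))!$ appearing in $\Upsilon^d$, and verify that the remaining spatial factors collapse into the ratios $\mu_k^z(\Xcal)/\mu_{\c,k}(z)$ of \eqref{eq:esf}. In the prediction the subtlety is purely notational, namely ensuring that the normalisations distinguishing the \ac{iid} cluster \ac{pgfl} (written in terms of the normalised density $s$) from the unnormalised intensity $\mu^\sharp$ cancel cleanly when the generating-function product is translated into \eqref{eq:cardpred}.
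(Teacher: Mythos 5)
You should note at the outset that the paper does not actually prove this proposition: it is stated as a quoted result and attributed to \cite{Mahler2007PHD} (with notation borrowed from \cite{Vo2007Analytic} and \cite{Delande2014Regional}), so there is no in-paper proof to match. That said, your sketch is the standard \ac{pgfl} derivation of the \ac{cphd} recursion and it is sound in outline: for the prediction, thinning by $p_{\s,k}$ composed with the Markov kernel inside an \ac{iid} cluster \ac{pgfl}, superposed with an independent birth process, does give \eqref{eq:cphdpred} by linearity of first moments, and evaluating the \ac{pgfl} at a constant argument $h\equiv z$ correctly reduces it to a product of probability generating functions whose $z^n$ coefficient is the convolution \eqref{eq:cardpred}, with $S[\mu,\rho](j)$ arising from the binomial thinning with averaged survival probability $\langle p_{\s,k},\mu\rangle/\langle 1,\mu\rangle$. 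For the update, your bivariate \ac{pgfl} is exactly the one the paper writes down in \eqref{eq:cphd_pgfl} for the likelihood proof, and the $|Z_k|$-fold derivative in $g$ at $g=0$ followed by a derivative in $h$ (for the intensity) or evaluation at $h\equiv z$ (for the cardinality) is precisely how \eqref{eq:cphdupdate} and the cardinality update are obtained; the combinatorial consolidation into $\Upsilon^d$ proceeds via the general product and chain rules \eqref{eq:generalproductrule}--\eqref{eq:generalchainrule} that the paper deploys in Appendix A for the likelihood theorem, where the same bookkeeping (partition of $Z_k$ into clutter-generated and target-generated subsets, factorials $n!/(n-(j+d))!$ from differentiating the $n$-th power, elementary symmetric functions $e_j$) is carried out for the $d=0$ case. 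So your proposal is essentially the likelihood proof of the appendix extended by one further derivative in $h$; the only caveat is that you have deferred rather than executed the factorial bookkeeping, which is where all the actual work in reproducing \eqref{eq:Ycphd} lies.
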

\begin{thm}[\ac{cphd} multi-object likelihood] 
The multi-object likelihood function of the \ac{cphd} filter for a given sensor state $s$ is found to be
\begin{equation}
\label{eq:likelihood_cphd}
\ell_k^\sharp(s|Z) =\langle \tilde{\Upsilon}^0[\mu_{k|k-1}^\sharp,Z],\rho_{k,k-1}\rangle
\end{equation}
with 
\begin{equation}
\begin{split}
&\tilde{\Upsilon}^0[\mu_{k|k-1}^\sharp,Z](n) = \hspace{-0.2cm}\sum_{j=0}^{\min(|Z|,n-u)}\hspace{-0.2cm} \frac{n!(|Z|-j)!}{(n-(j+d))!}\rho_\c(|Z|-j)\\
& \cdot{\mu_{k}^\phi(\Xcal|s)^{n-(j+d)}} \sum_{\substack{{Z'}\subseteq Z\\|{Z'}|=j}}\prod_{z\in {Z'}}{\mu_k^z(\Xcal)} \prod_{z'\in (Z')^c}{\mu_{\c,k} (z)},
\end{split}
\end{equation}
where $(Z')^c = Z\setminus Z'$.
\end{thm}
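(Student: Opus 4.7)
The plan is to obtain $\ell_k^\sharp(s|Z)$ as the Bayesian evidence (i.e.\ the normalising constant) of the CPHD multi-target posterior given sensor state $s$, by building the predicted measurement \ac{pgfl} and extracting the density of $Z$ via functional differentiation. This mirrors the derivation of \eqref{eq:likelihood_phd} for the \ac{phd} likelihood, but replaces the Poisson \ac{pgfl} of targets by the \ac{iid} cluster \ac{pgfl} of the \ac{cphd} filter, which is the source of the extra cardinality-dependent factors appearing in $\tilde\Upsilon^0$.

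First I would write the joint predicted \ac{pgfl} of targets and clutter conditioned on $s$. The predicted target process is an \ac{iid} cluster with cardinality $\rho_{k|k-1}$ and spatial density $s_{k|k-1}^\sharp(x|s)=\mu_{k|k-1}^\sharp(\d x|s)/\mu_{k|k-1}^\sharp(\Xcal|s)$, so its \ac{pgfl} is $\Gcal_{\text{t}}(g)=\sum_{n}\rho_{k|k-1}(n)\bigl(\int g(x)s_{k|k-1}^\sharp(x|s)\,\d x\bigr)^{n}$. Conditioning on a target realisation $\varphi$, the measurement process is Bernoulli detection plus \ac{iid} cluster clutter, so integrating out $\varphi$ yields the measurement \ac{pgfl}
\begin{equation*}
F_Z(h\mid s)=\Gcal_{\text{t}}\!\bigl(q_h(\cdot|s)\bigr)\cdot\Gcal_{\text{c}}(h),\qquad q_h(x|s)=1-p_{\d,k}(x|s)+p_{\d,k}(x|s)\!\int h(z)\ell_k(z|x,s)\,\d z,
\end{equation*}
and $\Gcal_{\text{c}}(h)=\sum_{m}\rho_\c(m)\bigl(\int h(z)s_{\c,k}(z|s)\,\d z\bigr)^{m}$.

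Second, I would recover the likelihood as $\ell_k^\sharp(s|Z)=\frac{\delta^{|Z|}F_Z(h|s)}{\delta z_1\cdots\delta z_{|Z|}}\bigl|_{h=0}$ using the product rule for functional derivatives. Each derivative $\delta/\delta z_i$ lands either on $\Gcal_{\text{t}}$ (a target-originated measurement) or on $\Gcal_{\text{c}}$ (a clutter point); summing over all partitions groups the result according to the subset $Z'\subseteq Z$ of size $j$ assigned to targets, which is precisely what generates $e_j(Z)$. Evaluating at $h=0$ collapses the integral of $q_h$ to $\int q_0(x|s)s_{k|k-1}^\sharp(x|s)\d x = \mu_k^\phi(\Xcal|s)/\mu_{k|k-1}^\sharp(\Xcal|s)$, while a single target-origin derivative applied to $q_h$ produces a factor $\mu_k^z(\Xcal)/\mu_{k|k-1}^\sharp(\Xcal|s)$. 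Differentiating $\Gcal_{\text{t}}$ a total of $j$ times brings down the falling factorial $n!/(n-j)!$ (and analogously $(|Z|-j)!$ from the $|Z|-j$ clutter derivatives together with $\rho_\c(|Z|-j)$), after which summing over $n$ against $\rho_{k|k-1}$ reproduces the inner product $\langle\tilde\Upsilon^0[\mu_{k|k-1}^\sharp,Z],\rho_{k|k-1}\rangle$ in \eqref{eq:likelihood_cphd}.

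The main obstacle, as in the analogous \ac{cphd} update derivation of \cite{Vo2007Analytic}, is the combinatorial bookkeeping: one must carefully track the falling factorial coefficients produced by repeated differentiation of the \ac{iid} cluster \ac{pgfl}, match them against the $n!(|Z|-j)!/(n-j)!$ coefficient in $\tilde\Upsilon^0$, and verify that the normalisation factors $\mu_{k|k-1}^\sharp(\Xcal|s)^{n}$ introduced by $s_{k|k-1}^\sharp$ cancel exactly with the denominators hidden in $\mu_k^\phi(\Xcal|s)$ and $\mu_k^z(\Xcal)$ so that the final expression involves only the unnormalised intensities. A clean induction on $|Z|$ via the product rule, together with the same subset-enumeration argument that defines $e_j$, handles this bookkeeping and identifies the resulting density with \eqref{eq:likelihood_cphd}.
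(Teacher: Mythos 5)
Your proposal follows essentially the same route as the paper's proof: you form the observation \ac{pgfl} as the product of the \ac{iid} cluster clutter \ac{pgfl} with the \ac{iid} cluster target \ac{pgfl} composed with the Bernoulli detection \ac{pgfl}, distribute the $|Z|$ measurement derivatives between the two factors via the general product rule (yielding the sum over subsets $Z'\subseteq Z$), evaluate at $h=0$ to collapse the remaining terms into the missed-detection and association factors, and finally sum over $n$ against $\rho_{k|k-1}$ to obtain the inner-product form. The combinatorial bookkeeping you flag (falling factorials from the repeated differentiation of both \ac{iid} cluster \ac{pgfl}s, and the tracking of the $\mu_{k|k-1}^\sharp(\Xcal|s)$ normalisations) is exactly the content of the paper's closing ``switching summations and rearranging the terms'' step, so the argument is correct and matches the paper's.
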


\begin{proof}
See appendix.
\end{proof}

\subsection{An alternative likelihood function \cite{Leung2016Multifeature}}
For the sake of comparison, an alternative likelihood function is consulted which is introduced in \cite{Leung2016Multifeature}. This multi-object likelihood takes all possible associations between the measurement set and the target population into account. First of all, write the short-hand notation
\begin{equation}
p_\c(Z) = \frac{\prod_{z\in Z} s_\c(z|s_k)}{\exp\left( \int s_\c(z'|s_k) dz' \right)}.
\end{equation}
Furthermore, define the association function $\theta:\Bcal(\Zcal) \rightarrow \Bcal(\Xcal)$ which maps a selection of measurements $Z=\{z_1,\dots,z_{m_k}\}$ to a selection of targets $X=\{x_1,\dots,x_{n_k}\}$ via
\begin{equation}
\theta(z_j) = \begin{cases}
x_i & \text{if $z_j$ is associated with $x_i$,}\\
0 & \text{otherwise,}
\end{cases}
\end{equation}
where the $x_i$ are extracted from the predicted intensity $\mu_{k|k-1}^\bullet$ with $\bullet \in \{\flat,\sharp,\natural\}$.
Then,
\begin{equation}
\label{eq:likelihood_martin}
\begin{split}
\hat{\ell}_k(s|Z)&= p_\c(Z)\prod_{i=1}^{n_k}(1-p_\d(x_i|s))\\
&\quad\cdot\sum_{\theta}\prod_{\substack{j=1\\ \theta(j)\neq 0}}^{m_k}\frac{p_\d(\theta(z_j)|s) \ell(z_j|\theta(z_j),s)}{(1-p_\d(\theta(z_j|s)) p_\c(z_j)}.
\end{split}
\end{equation}
Note that in contrast to \eqref{eq:likelihood_phd}, the associations in the last term of \eqref{eq:likelihood_martin} are \emph{not} marginalised over all possible states, but this algorithm depends on the extraction of specific object locations $x_i$. In other words, this approach does not use the full information available through the predicted intensity.
\section{Simulations}
\label{sec:experiments}

All experiments presented in this section are performed using a Gaussian Mixture implementation of the algorithms described earlier, see \cite{Vo2006Gaussian,Vo2007Analytic,Schlangen2017second} for more detail. The calibration is conducted for all filters with their respective full multi-object likelihoods \eqref{eq:likelihood_phd}, \eqref{eq:likelihood_panjer} and \eqref{eq:likelihood_cphd}, henceforth globally denoted by L1, as well as with the likelihood \eqref{eq:likelihood_martin}, labeled by L2 in the following. The parent process is implemented in all cases with a \ac{smc} filter approach in the manner of \cite{Lee2012SLAM,Schlangen2016Marker} and others, using 300 \ac{mc} particles for each run and performing basic roulette resampling if the effective sample size falls below $150$. All results presented in the following are averaged over 50 \ac{mc} runs, and the sensor estimate as well as the estimated number of targets is computed as the weighted mean over all particles.

As \cite{Leung2016Multifeature} suggests, the likelihood \eqref{eq:likelihood_martin} needs additional assumptions to make it computationally feasible. Firstly, only associations are taken into account that lead to a single object - single association likelihood above a threshold $\tau_0 = 10^{-7}$ in the first time step and $\tau = 10^{-3}$ otherwise. Furthermore, a connected component analysis is performed to find groups of object-measurement clusters that are worth being associated, and these groups are restricted contain at most 3 measurements and 3 targets. Note that the three multi-object likelihood functions \eqref{eq:likelihood_phd}, \eqref{eq:likelihood_panjer} and \eqref{eq:likelihood_cphd} do not require any restrictions.

\subsection{Experiment 1}
For the first experiment, a global ground truth is simulated over 100 time steps (of unit \SI{1}{\second}) for both the multi-target configuration and the sensor trajectory, and the 50 \ac{mc} runs are performed on different measurement sets extracted from this ground truth. Both the target and the sensor state space are assumed four-dimensional, accounting for position and velocity in a two-dimensional environment measured in \si{\metre}. The targets follow a Poisson birth model with mean 4, and the objects move according to a \ac{ncv} model with acceleration noise of \SI{0.3}{\metre\per\square\second} and with an initial velocity of 0 with Gaussian noise of \SI{0.1}{\metre\per\second} in both x and y. Each target survives with a probability of $\p_\s = 0.95$. The sensor follows an \ac{ncv} model with acceleration noise \SI{0.2}{\metre\per\square\second} and initial velocity \SI{0}{\metre\per\second} in both x and y. The simulated sensor trajectory is depicted in Fig.~\ref{fig:drift_ex1}. 

From this scenario, measurements are generated with a uniform detection probability of $p_\d = 0.99$ over the whole state space. The observation space is assumed two-dimensional, accounting for the two dimensions of the environment that contains the objects. Measurements are superimposed with a measurement noise of \SI{0.1}{\metre} in both dimensions and with the created sensor drift, and false alarms are generated uniformly over the state space according to a Poisson noise model with mean $10$. 

In this scenario, the filter parameters are set to the same parameters that were used to generate the simulation. Fig.~\ref{fig:rmse_ex1} shows the root mean square error in the estimation of the sensor trajectory over all $100$ time steps for all filters. It can be seen that the three filters do not differ greatly among each other since the generated measurements fit the filter parameters. 
The full multi-object likelihoods associated to each filter, however, bring a much better estimation of the sensor state in comparison to the likelihood suggested by \cite{Leung2016Multifeature} which is consistently diverging after time step 10 for all three filters. In terms of the estimated number of targets (Fig.~\ref{fig:card_error_ex1_l1} and \ref{fig:card_error_ex1_l2}), all filters seem to monitor the ground truth consistently, apart from the second-order \ac{phd} filter with the alternative likelihood that slightly underestimates the number of targets in comparison to the other filters.  

Tab.~\ref{tab:runtimes_ex1} shows the averaged runtimes for the prediction, update and likelihood functions of all filters, averaged over 50 \ac{mc} runs. The \ac{cphd} filter update is up to 16 times slower than the updates of the \ac{phd} filters of first and second order. Moreover, the alternative likelihood function L2 also performs considerably slower than the full multi-object likelihood function.

\begin{table} 
\begin{center}
  \begin{tabular}{| l | c  c  c |}
    \hline
   filter & prediction & update & likelihood \\ \hline
   \ac{phd} L1 & 0.0022  &  0.2805  &  0.0182 \\ 
   \ac{phd} L2 & 0.0022  &  0.7205  &  0.5007 \\
   SO-\ac{phd} L1 & 0.0024  &  0.5914  &  0.0191 \\
   SO-\ac{phd} L2 & 0.0019  &  0.8681  &  0.4285 \\
   \ac{cphd} L1 & 0.2256  &  4.5857  &  0.0218 \\
   \ac{cphd} L2 & 0.2141  &  4.8107  &  0.4630 \\
    \hline
  \end{tabular}
\end{center}
\caption{Averaged runtimes in seconds, Experiment 1.\label{tab:runtimes_ex1}}
\end{table}

\begin{table}
\begin{center}
  \begin{tabular}{| r | l | c  c  c |}
    \hline
   & filter & prediction & update & likelihood \\ \hline
   \parbox[t]{2mm}{\multirow{6}{*}{\rotatebox[origin=c]{90}{14 deaths}}}&\ac{phd} L1 & 0.0014  &  0.0812  &  0.0033 \\ 
   &\ac{phd} L2 & 0.0014  &  0.0897  &  0.0173 \\
   &SO-\ac{phd} L1 & 0.0014  &  0.2003  &  0.0036 \\
   &SO-\ac{phd} L2 & 0.0013  &  0.1957  &  0.0153 \\
   &\ac{cphd} L1 & 0.3281  &  1.6374  &  0.0067 \\
   &\ac{cphd} L2 & 0.3079  &  1.5674  &  0.0166 \\
    \hline
    \parbox[t]{2mm}{\multirow{6}{*}{\rotatebox[origin=c]{90}{15 births}}}&\ac{phd} L1 & 0.0016  &  0.2323  &  0.0109 \\ 
   &\ac{phd} L2 & 0.0016  &  0.2664  &  0.0484 \\
   &SO-\ac{phd} L1 & 0.0017  &  0.7661  &  0.0117 \\
   &SO-\ac{phd} L2 & 0.0017  &  0.7389  &  0.0431 \\
   &\ac{cphd} L1 & 0.3185  &  5.1335  &  0.0143 \\
   &\ac{cphd} L2 & 0.3066  &  4.8654  &  0.0427 \\
    \hline
  \end{tabular}
\end{center}
\caption{Averaged runtimes in seconds, Experiment 2.\label{tab:runtimes_ex2}}
\end{table}

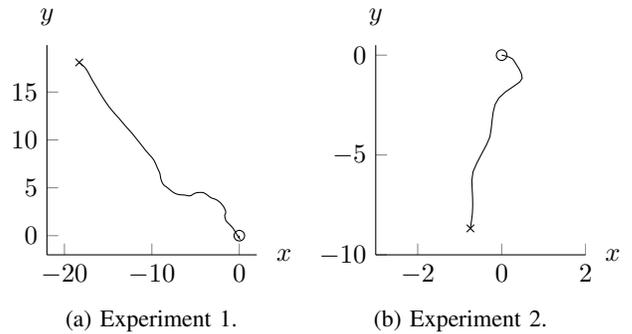
\begin{figure}[t] 
\centering
\begin{subfigure}[t]{0.45\linewidth}
\centering
%
\begin{tikzpicture}[]

\begin{axis}[%
width=0.7\linewidth,
height=0.7\linewidth,
at={(0.771875in,0.515625in)},
scale only axis,
axis x line*=bottom,
axis y line*=left,
xlabel=$x$,
ylabel=$y$,
every axis x label/.style={
    at={(ticklabel* cs:1.05)},
    anchor=west,
},
every axis y label/.style={
    at={(ticklabel* cs:1.05)},
    anchor=south,
},
xmin=-22,
xmax=2,
ymin=-2,
ymax=19.9
]
\addplot [color=black,solid,forget plot]
  table[row sep=crcr]{%
0	0\\
-0.00163935929479094	-0.0126820841930855\\
0.00337561626034735	-0.0530922974982752\\
-0.022925981304892	-0.123414197665864\\
-0.055883989863481	-0.195450104339777\\
-0.0349695213721801	-0.236161823612396\\
-0.0341523173669201	-0.237219529492606\\
-0.055960235492194	-0.219343622513441\\
-0.0743314561218991	-0.175803635617684\\
-0.126230953679885	-0.0934627818758643\\
-0.225835304123644	0.0500160245717312\\
-0.3674750512643	0.257482004315846\\
-0.517756995789861	0.485023443057277\\
-0.662845928842743	0.702166776512811\\
-0.813510690355783	0.882732887040467\\
-0.975395371727279	1.04332536854894\\
-1.13937823868087	1.19603360261135\\
-1.30990909996644	1.33942446141506\\
-1.45703702626551	1.48284481641269\\
-1.5542797996748	1.62704874953\\
-1.60571241249043	1.79516311470512\\
-1.6443201807525	1.96375715693431\\
-1.67214275286091	2.09440676932212\\
-1.64186732113017	2.19938436846284\\
-1.58317715627217	2.29649966426617\\
-1.54580391595925	2.41676818855523\\
-1.56209244547658	2.55754090662851\\
-1.6348077692039	2.73754167218695\\
-1.73942650680391	2.94426064031927\\
-1.87774390097901	3.13655631778531\\
-2.05755992262275	3.34222525264839\\
-2.28243542970054	3.54247959821001\\
-2.54199624661439	3.71396979986602\\
-2.828520431318	3.8360879278825\\
-3.1063382736817	3.91223566389946\\
-3.3059555216612	3.99982386778982\\
-3.47390356653601	4.11472497029165\\
-3.67474885011548	4.25694595824537\\
-3.89400111616377	4.39748043494008\\
-4.11190305182022	4.49378787936594\\
-4.32846267268796	4.51880678841037\\
-4.54704135886417	4.51191079811237\\
-4.75209585809782	4.50665667760081\\
-4.96466700684593	4.46759727651345\\
-5.17930520810435	4.39004803612692\\
-5.35983606009848	4.28419553307819\\
-5.51248758337565	4.19557971545705\\
-5.69012266480596	4.15666683041782\\
-5.89224748045869	4.17251331023687\\
-6.09842009010417	4.21925281375354\\
-6.34499904750203	4.25036652745333\\
-6.60447728509992	4.2601691209265\\
-6.85596579032959	4.27897963824588\\
-7.08573752795386	4.32708282139882\\
-7.31427305666162	4.40508986407819\\
-7.54024519663351	4.51529769418218\\
-7.73309639501572	4.64458187688537\\
-7.91747529984032	4.77649072276812\\
-8.11377069591199	4.9312879461896\\
-8.33437605448606	5.09083459683322\\
-8.53279907294603	5.21606878304445\\
-8.69263811545972	5.34845937464376\\
-8.82244713969131	5.54767351142291\\
-8.91886170630418	5.77702194116637\\
-8.98863903712909	5.9973713313547\\
-9.03084506179869	6.23419546823697\\
-9.07936630184869	6.49923500924556\\
-9.20975000411329	6.78395332621454\\
-9.36375876752583	7.06871271495621\\
-9.48533061216162	7.3480125653926\\
-9.62270516899562	7.62513222684362\\
-9.77544242703462	7.89839332813276\\
-9.97653784665954	8.14931665970031\\
-10.2365144713429	8.4103157746653\\
-10.5154517720757	8.69554317758901\\
-10.8066262166603	9.01263597526619\\
-11.1055618391248	9.36395415099408\\
-11.421171696386	9.73366912866895\\
-11.7744693736114	10.12958278532\\
-12.1507448481199	10.5648036678107\\
-12.5775515465903	11.017429931405\\
-13.0244219541652	11.456622159815\\
-13.4291200283507	11.9037066020331\\
-13.8052302235085	12.3050716243299\\
-14.1738249323067	12.676099738098\\
-14.5557503106966	13.0756028839224\\
-14.929472595126	13.4998758314092\\
-15.2717618780459	13.9463656698043\\
-15.5882037408834	14.3888980879405\\
-15.8787903732085	14.8068473206064\\
-16.1363522440144	15.2062882484075\\
-16.3840364095656	15.5993176908981\\
-16.6396993894834	15.9800687104914\\
-16.871352097651	16.3407540312187\\
-17.0745429695176	16.6836293542994\\
-17.2727477217761	17.027945763815\\
-17.4887364973744	17.3645458045637\\
-17.7395274343439	17.655572490296\\
-18.0334590862929	17.8856773820063\\
-18.3059802425464	18.0927420253925\\
};
\addplot [color=black,only marks,mark=x,mark options={solid},forget plot]
  table[row sep=crcr]{%
-18.3059802425464	18.0927420253925\\
};
\addplot [color=black,only marks,mark=o,mark options={solid},forget plot]
  table[row sep=crcr]{%
0 0\\
};
\end{axis}
\end{tikzpicture}%
\caption{Experiment 1.}
\label{fig:drift_ex1}
\end{subfigure}
\begin{subfigure}[t]{0.45\linewidth}
\centering
%
\begin{tikzpicture}[spy using outlines={circle,yellow,magnification=4,size=1cm, connect spies}]

\begin{axis}[%
width=0.7\linewidth,
height=0.7\linewidth,
at={(-0.75in,-1.43in)},
scale only axis,
xmin=-3,
xmax=2,
ymin=-10,
ymax=0.5,
axis x line*=bottom,
axis y line*=left,
xlabel=$x$,
ylabel=$y$,
every axis x label/.style={
    at={(ticklabel* cs:1.05)},
    anchor=west,
},
every axis y label/.style={
    at={(ticklabel* cs:1.05)},
    anchor=south,
},
legend style={at={(0.97,0.97)},anchor=north east,legend cell align=left,align=left,draw=white!15!black,fill opacity=0.6,draw opacity=1,text opacity=1,font=\footnotesize}
]

\addplot [color=black,only marks,mark=o,mark options={solid},forget plot]
  table[row sep=crcr]{%
0 0\\
};
\addplot [color=black,solid,forget plot]
  table[row sep=crcr]{%
0	0\\
0.0228494805366445	-0.00855484415563054\\
0.0847610642674951	-0.0386893066943683\\
0.183176700733606	-0.101368575191659\\
0.266760220369878	-0.201993880693519\\
0.305052392597346	-0.322231414690507\\
0.355034523760315	-0.492162787532579\\
0.422819474175033	-0.716095348807933\\
0.47312566483537	-0.948197942049715\\
0.489010962190335	-1.16162751769615\\
0.428688934081147	-1.35069538540791\\
0.282583350965289	-1.56763087870515\\
0.095850386024269	-1.83343586573326\\
-0.0631980584906699	-2.13866334010012\\
-0.156434690064538	-2.47170550004999\\
-0.201720110550816	-2.85600157044321\\
-0.228568706114242	-3.28368053085249\\
-0.24990811132194	-3.69601963886631\\
-0.27987550103791	-4.09473630200992\\
-0.36273770010769	-4.4954812273556\\
-0.473787646295186	-4.93239030867382\\
-0.587769134132453	-5.3924590437208\\
-0.68119118454224	-5.83342549205282\\
-0.710021866646933	-6.27368587194122\\
-0.702252010518006	-6.70439315123428\\
-0.692311495812455	-7.12225972186957\\
-0.689089999462438	-7.5468136859123\\
-0.697596738412883	-7.94068227574295\\
-0.719964182198425	-8.30814807040801\\
-0.744767427447672	-8.67653100392253\\
};
\addplot [color=black,only marks,mark=x,mark options={solid},forget plot]
  table[row sep=crcr]{%
-0.744767427447672	-8.67653100392253\\
};
\end{axis}
\end{tikzpicture}%
\caption{Experiment 2.}
\label{fig:ex2_truedrifts}
\end{subfigure}
\caption{Ground truth for the sensor state for Experiments 1 and 2, both originating at $(0,0)$.} 
\end{figure}

\begin{figure}[t] 
\centering
\begin{subfigure}[t]{0.9\linewidth}
\centering
\input{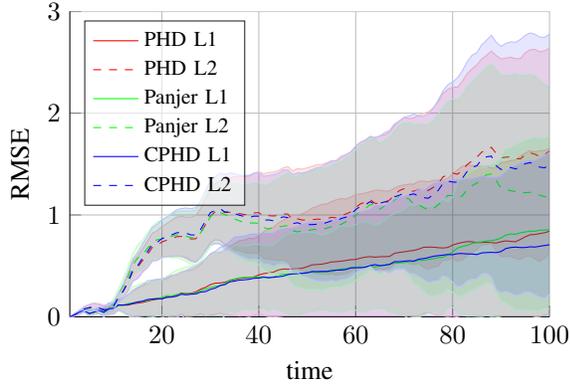}
\caption{Error in the estimated sensor state.}
\label{fig:rmse_ex1}
\end{subfigure}\\
\begin{subfigure}[t]{0.9\linewidth}
\centering
\input{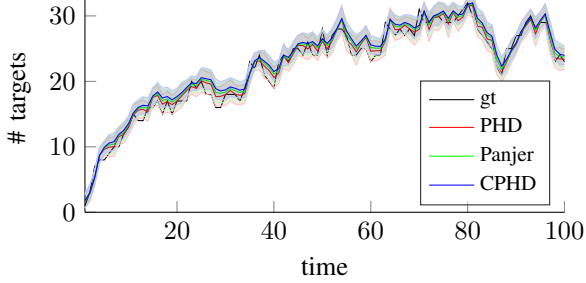}
\caption{Estimated number of targets, proposed likelihood (L1).}
\label{fig:card_error_ex1_l1}
\end{subfigure}\\
\begin{subfigure}[t]{0.9\linewidth}
\centering
\input{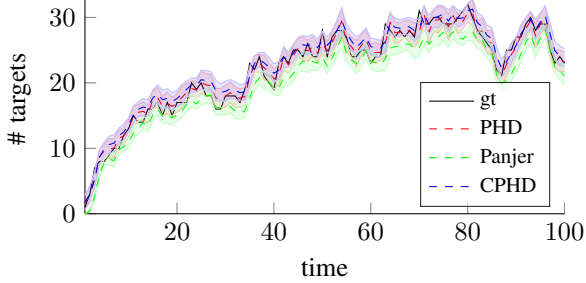}
\caption{Estimated number of targets, alternative likelihood (L2).}
\label{fig:card_error_ex1_l2}
\end{subfigure}
\caption{Results for Experiment 1 (no model mismatches).}
\label{fig:ex1}
\end{figure}

\subsection{Experiment 2}
The second experiment aims at analysing the effect of model mismatches on the robustness of the filters. While the filters follow the same target birth/death model as in Experiment 1, the ground truth does not: 15 targets are created at the initial step, and stay alive until time step $t = 15$; then, either $14$ objects are artificially removed (Experiment 2.1) or $15$ objects are added to the existing population (Experiment 2.2), such that the number of targets changes to $1$ or $30$, respectively, all of which stay alive until the end of the scenario. Little is known for the filters about the birth/death model, thus the second-order \ac{phd} and the \ac{cphd} filters are fed with a negative binomial birth with mean $2$ and variance $20$, accounting for a large uncertainty. The \ac{phd} filter, on the other hand, can only describe the number of newborn targets through its mean value, which is set to $2$ as well. The probability of survival in the three filters is set to $p_\s=0.99$. There are no model mismatches for the remaining parameters, whose values are set as in Experiment 1, except for a slightly smaller acceleration noise (\SI{0.1}{\metre\per\square\second}).

Fig.~\ref{fig:ex21} shows the estimation results for Experiment 2.1, with unexpected target death at time $t=15$. As for the first experiment, the proposed likelihood L1 leads to a significantly more accurate estimation of the sensor state. Regardless of the chosen filter or likelihood, the estimation error increases sharply after time $t=15$; it might be explained by the sudden target death which drastically decreases the amount of information for the estimation of the sensor state.  Fig.~\ref{fig:card_ex2_death_l1} and \ref{fig:card_ex2_death_l2} suggest that the \ac{cphd} filter is less reactive to unexpected target disappearances; this might be explained by its lower flexibility to out-of-model target deaths since it maintains a full cardinality distribution on the number of targets. On the other hand, it remains unclear why the second-order \ac{phd} filter slightly underestimates the number of targets if combined with the alternative likelihood L2, but shows accurate results with the proposed likelihood L1.

The results for Experiment 2.2 are displayed in Fig.~\ref{fig:ex22}. Again, the proposed method using likelihood L1 shows better performances in the estimation of the sensor state. Unlike Experiment 2.1, the unexpected increase in the number of targets provides more information and seems to facilitate the estimation of the sensor state, resulting in an improvement for all the filters and likelihoods immediately after time step $t = 15$. Again, the second-order \ac{phd} filter underestimates the number of targets when combined with the alternative likelihood L2. 

Tab.~\ref{tab:runtimes_ex2} confirms the findings on the runtime analysis of Experiment 1. The decrease in target number in Experiment 2.1 leads to a generally faster performance for all three filters, whereas the increase in target number in Experiment 2.2 results in longer runtimes. In the update, the \ac{cphd} filter again runs up to 20 times slower than the \ac{phd} filter, whereas the second-order \ac{phd} filter needs only twice as much time to run as the \ac{phd} filter. Furthermore, the proposed likelihood L1 is consistently faster than the alternative likelihood L2.

\begin{figure}[t] 
\centering
\begin{subfigure}[t]{0.9\linewidth}
\centering
\input{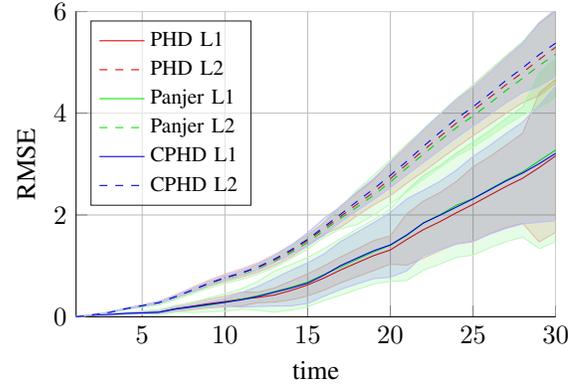}
\caption{Error in the estimated sensor state.}
\label{fig:rmse_ex2_death}
\end{subfigure}\\
\begin{subfigure}[t]{0.9\linewidth}
\centering
\input{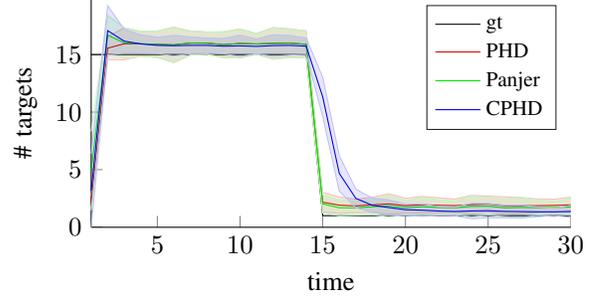}
\caption{Estimated number of targets, proposed likelihood (L1).}
\label{fig:card_ex2_death_l1}
\end{subfigure}\\
\begin{subfigure}[t]{0.9\linewidth}
\centering
\input{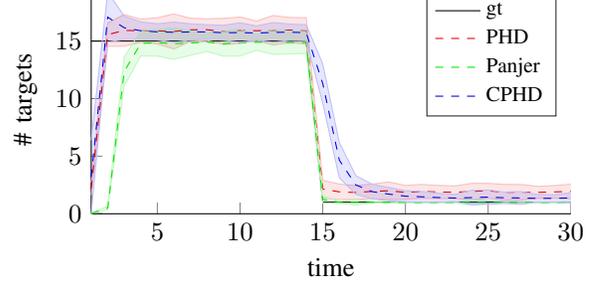}
\caption{Estimated number of targets, alternative likelihood (L2).}
\label{fig:card_ex2_death_l2}
\end{subfigure}
\caption{Experiment 2.1 (out-of-model target deaths).}
\label{fig:ex21}
\end{figure}

\begin{figure}[t] 
\centering
\begin{subfigure}[t]{0.9\linewidth}
\centering
\input{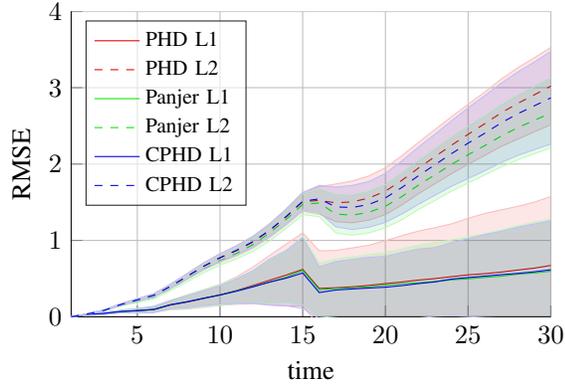}
\caption{Error in the estimated sensor state.}
\label{fig:rmse_ex2_birth}
\end{subfigure}\\
\begin{subfigure}[t]{0.9\linewidth}
\centering
\input{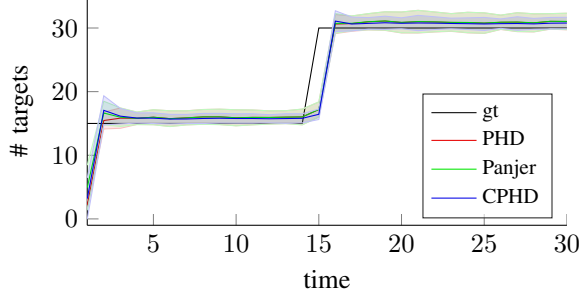}
\caption{Estimated number of targets, proposed likelihood (L1).}
\label{fig:rmse_ex2_birth_l1}
\end{subfigure}\\
\begin{subfigure}[t]{0.9\linewidth}
\centering
\input{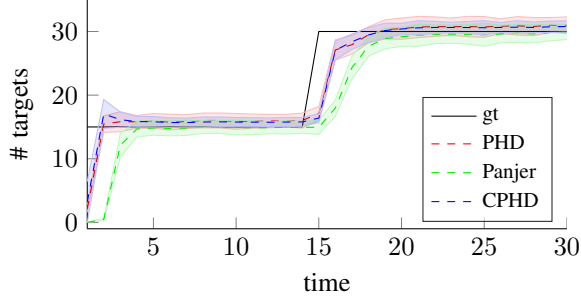}
\caption{Estimated number of targets, alternative likelihood (L2).}
\label{fig:rmse_ex2_birth_l2}
\end{subfigure}
\caption{Experiment 2.2 (out-of-model target births).}
\label{fig:ex22}
\end{figure}

\section{Conclusion}
\label{sec:conclusion}

Following the idea of the single-cluster \ac{phd} filter, the second-order \ac{phd} filter and the \ac{cphd} filter have been embedded in a single-cluster environment to solve estimation problems where sensor-specific parameters have to be estimated alongside the multi-target configuration, e.g.~robot localisation, sensor calibration, or the estimation of the sensor profile. Specific multi-object likelihood functions for both filters have been derived in the same manner as it was previously done for the \ac{phd} filter, and simulations have shown that these likelihood functions yield better results in the sensor state estimation than existing multi-object likelihood approaches. All three filters perform well on the experimental setups studied in this article, even though the single-cluster \ac{cphd} filter is significantly slower than the other two filters.


\section*{Appendix A: Proofs for the multi-object likelihoods}

The appendix provides proofs for the likelihood functions \eqref{eq:likelihood_phd}, \eqref{eq:likelihood_cphd} and \eqref{eq:likelihood_panjer}. Therefore, let us define some basic concepts of \ac{pgfl}s and their differentiation.

\subsection{\ac{pgfl}s and functional differentiation}
\ac{pgfl}s are convenient tools for calculating moments and other statistical quantities using functional differentiation \cite{Stoyan1997Stochastic}. In the following, the so-called chain derivative will be used for this purpose as follows \cite{Bernhard2005Chain}. Let $(\varepsilon_n)_{n\in \mathbb{N}}$ be a sequence of real numbers converging to $0$ and $(\eta_n: \Xcal \rightarrow \mathbb{R}^+)_{n\in \mathbb{N}}$ be a series of functions converging pointwise to a function $\eta: \Xcal \rightarrow \mathbb{R}^+$.
For any given functional $G$ and test function $h: \Xcal \rightarrow \mathbb{R}^+$,  define the (chain) derivative of $G$ with respect to $h$ in the direction of $\eta$ with 
\begin{equation}
\label{eq:chaindiff}
\delta G(h;\eta) = \lim_{n\rightarrow\infty}\cfrac{G(h+\varepsilon_n \eta_n) - G(h)}{\varepsilon_n}.
\end{equation}
If the limit exists, it is identical for any two sequences $(\varepsilon_n)_{n\in \mathbb{N}}$ and $(\eta_n)_{n\in \mathbb{N}}$ as described above.

Since the multi-object likelihood $\ell_k(s|Z)$ is found by marginalising the observation process over the measurement set $Z$, the following differentiation will be of importance for the proofs.
\begin{equation}
P_\Phi^{(n)}(B_1\!\times \cdots \times\!B_n) = \delta^n\Gcal_\Phi(h;\mathds{1}_{B_1}, \ldots, \mathds{1}_{B_n})|_{h=0}; \label{eq:probaderivative}
\end{equation}
in particular,
\begin{equation}
P_\Phi^{(m)}\big((z_1, \dots, z_m)\big) = \delta^m\Gcal_\Phi(h;\delta_{z_1}, \ldots, \delta_{z_m})|_{h=0} \label{eq:probaderivative}
\end{equation}
with Dirac delta functions $\delta_{z_i}$; the notation $\delta G (f,\delta_x)$ will be understood as the Radon-Nikodym derivative of the measure $\mu' : B \mapsto \delta G (f,\mathds{1}_B)$ evaluated at point $x$, i.e. 
\begin{equation}
\delta G (f,\delta_x) := \dfrac{\mathrm{d} \mu' }{\mathrm{d} \lambda}(x),
\end{equation}
for any appropriate function $f$ on $\mathcal{X}$ and any point $x \in \mathcal{X}$ under the assumption that the theorem of Radon and Nikodym holds \cite{nikodym1930generalisation}.

Similarly to ordinary differentiation, the differential \eqref{eq:chaindiff} admits a product rule and a chain rule given by \cite{Bernhard2005Chain}
\begin{align}
\label{eq:productrule}
\delta(F\cdot G)(h;\eta) &= \delta F(h;\eta) G(h) + F(h) \delta G(h;\eta), \\
\label{eq:chainrule}
\delta(F\circ G)(h;\eta) &= \delta F(G(h);\delta G(h;\eta)),
\end{align}
respectively. These rules can be generalised to the $n$-fold product and chain rules \cite{Clark2013Faa, Clark2015few}
\begin{align}
\delta^n&(F\cdot G)(h;\eta_1,\dots,\eta_n) \nonumber \\ 
&= \! \sum_{\omega\subseteq \{1,\dots,n\}} \!\delta^{|\omega|} F\Big(h;(\eta_i)_{i \in \omega}\Big) \delta^{|\omega^{\c}|} G\Big(h;(\eta_j)_{j \in {\omega}^\c}\Big),\label{eq:generalproductrule}\\
\delta^n&(F\circ G)(h;\eta_1,\dots,\eta_n) \nonumber\\ 
&= \sum_{\pi \in \Pi_n} \delta^{|\pi|} F\left(G(h);\left(\delta^{|\omega|}G(h; (\eta_i)_{i \in \omega})\right)_{\omega \in \pi}\right).\label{eq:generalchainrule}
\end{align}

\subsection{Four examples of point processes}
Since the \ac{phd}, \ac{cphd} and second-order \ac{phd} filters utilise different special cases of point processes, four relevant examples will be listed below.
\subsubsection{Independent and identically distributed cluster process}
An \ac{iid} cluster process with cardinality distribution $\rho$ on $\Nbb$ and spatial distribution $s$ on $\Xcal$ describes a population whose size is described by $\rho$, and whose object states are \ac{iid} according to $s$. Its \ac{pgfl} is given by
\begin{equation}
\Gcal_\mathrm{i.i.d.}(h) = \sum_{n\geq 0} \rho(n) \left[\int h(x)s(\d x) \right]^n .\label{eq:iid}
\end{equation}
The \ac{cphd} filter is constructed using \ac{iid} cluster processes for the predicted target process as well as for the birth and clutter processes.

\subsubsection{Bernoulli process}
A Bernoulli point process with parameter $0 \leq p \leq 1$ and spatial distribution $s$ is an \ac{iid} cluster process with spatial distribution $s$ whose size is $1$ with probability $p$ and $0$ with probability $q = (1 - p)$. Its \ac{pgfl} is given by
\begin{equation}
\label{eq:bernoulli}
\Gcal_\mathrm{Bernoulli}(h)= q + p \int h(x)s(\d x).
\end{equation}
In the context of target tracking, Bernoulli processes are commonly used to describe binary events such as the detection or survival of individual targets.
\subsubsection{Poisson process}
A Poisson process with parameter $\lambda$ and spatial distribution $s$ is an \ac{iid} cluster process with spatial distribution $s$, whose size is Poisson distributed with rate $\lambda$. Its \ac{pgfl} is given by
\begin{equation}
\label{eq:poisson}
\Gcal_\mathrm{Poisson}(h) =  \exp \left(\int [h(x)-1]\mu(\d x)\right),
\end{equation}
where the intensity measure $\mu$ of the process  is such that $\mu(\d x) =  \lambda s(\d x)$. In the construction of the \ac{phd} filter, Poisson point processes are used for the predicted target and clutter processes.

\subsubsection{Panjer process}\label{subsubsec:panjerprocess}
A Panjer point process with parameters $\alpha$ and $\beta$ and spatial distribution $s$ is an \ac{iid} cluster process with spatial distribution $s$, whose size is Panjer distributed with parameters $\alpha$ and $\beta$ \cite{Fackler2009Panjer}. Its \ac{pgfl} is given by \cite{Schlangen2016PHD,Schlangen2017second}
\begin{equation}
\label{eq:panjer}
\Gcal_\mathrm{Panjer}(h) = \left( 1+\frac{1}{\beta} \int [1-h(x)]s(\d x)\right)^{-\alpha}.
\end{equation}

Each of the following proofs follows the same structure. First, the \ac{pgfl} of the observation process is found using the specific assumptions of the respective filter. In particular,
\begin{equation}
\label{eq:generalpgfl}
G_\text{obs}(g|s) = G_{k|k-1}(G_\d(g|\cdot,s)) G_\c(g|s) 
\end{equation}
where $s \in \Scal$. The functional $G_{k|k-1}$ denotes the \ac{pgfl} of the predicted process and $G_\c$ is the \ac{pgfl} of the clutter process; the detection process is always described by a Bernoulli point process with (possibly state-dependent) parameter $p_\d(\cdot)$ and single-object likelihood $l(\cdot|\cdot)$, thus its \ac{pgfl} takes the form \eqref{eq:bernoulli}.

\subsection{Proof of  \ac{phd} likelihood \eqref{eq:likelihood_phd}}
\begin{proof}[\unskip\nopunct]
The \ac{phd} likelihood was demonstrated in \cite{Swain2013Group} in the context of random finite sets, for the sake of completeness this section provides an alternative proof using point processes.
In case of the \ac{phd} filter, both $G_{k|k-1}$ and $G_\c$ are Poisson \ac{pgfl}s such that \eqref{eq:generalpgfl} takes the form
\begin{equation}
\label{eq:phd_pgfl}
\begin{split}
&G_\text{obs}^\flat(g|s) = \exp \Big( \int(g(z)-1)\mu_\c(z|s) \\
&+ \int\bigg(\bigg[1-p_\d(x|s)+  p_\d(x|s)\int g(z) \ell(z|x,s) \d z\bigg] -1\bigg) \\ &\pushright{\mu_{k|k-1}(x|s) \d x\Big)}.
\end{split}
\end{equation}
Differentiation by $g$ requires repeated applications of the chain rule \eqref{eq:chainrule} which pulls out one multiplicative term $\mu_\c(z)+\int p_\d(x|s)l(z|x,s)\mu_{k|k-1}(x|s) \d x$ for each measurement ${z \in Z_k}$. The final result is obtained by setting $g=0$. 
\end{proof}

\subsection{Proof of  \ac{cphd} likelihood \eqref{eq:likelihood_cphd}}
\begin{proof}[\unskip\nopunct]
The \ac{cphd} filter assumes the target and clutter processes to be \ac{iid} cluster processes. This results in the following variation of \eqref{eq:generalpgfl}:
\begin{equation}
\label{eq:cphd_pgfl}
\begin{split}
&G_\text{obs}^\sharp(g|s) = G^\sharp_\c(g|s) G_{k|k-1}^\sharp(G_\d(g|\cdot,s)) \\
&:=\left(\sum_{n\geq 0} \rho_\c(n) \left[\int g(z)s_\c( z|s)\d z \right]^n \right)\\
&\cdot\Bigg(\sum_{n\geq 0} \rho_{k|k-1}(n) \bigg[\int \bigg(1-p_\d(x|s) \\
&\pushright{\qquad+ p_\d(x|s)\int g(z)\ell(z|x,s)\d z \bigg)s_{k|k-1}^\sharp(x|s)\d x \bigg]^n\Bigg)}
\end{split}
\end{equation}
Since the product in \eqref{eq:cphd_pgfl} cannot be simplified like in \eqref{eq:phd_pgfl} above, it requires the use of the general product rule \eqref{eq:generalproductrule} leading to 
\begin{equation}
\label{eq:cphd_derivative}
\begin{split}
&\delta^m G_\text{obs}^\sharp(g;\delta_{z_1},\dots,\delta_{z_m}|s)|_{g=0} \\
&= \sum_{Z\subseteq Z_m}\Big(\delta^{|Z^c|}G^\sharp_\c(g;(\delta_{z})_{z\in Z^c}|s)|_{g=0} \\
&\qquad \qquad \qquad\cdot\delta^{|Z|}G_{k|k-1}^\sharp(G_\d(g;(\delta_{z})_{z\in Z}|\cdot,s))|_{g=0} \Big),\\
\end{split}
\end{equation}
where $Z^c = Z_m\setminus Z$. 
The first term in \eqref{eq:cphd_derivative} creates the product $\prod_{z\in Z_m\setminus Z} s_\c(z|s) \rho_\c(Z^c)$ for all $Z\subseteq Z_k$ since setting $g=0$ eliminates almost every term of the \ac{iid} cluster process. The second term is evaluated using the chain rule \eqref{eq:generalchainrule} which leads to the sum
\begin{equation}
\begin{split}
&\delta^{|Z|}G_{k|k-1}^\sharp(G_\d(g;(\delta_{z})_{z\in Z}|\cdot,s))|_{g=0}\\
&= \sum_{n\geq |Z|} \frac{n!\rho_{k|k-1}(n)}{(n-|Z|)!}\prod_{z\in Z}\int p_\d(x)\ell(z|x,s)s_{k|k-1}^\sharp(x|s) \d x  \\
&\quad\cdot\left(\int(1-p_\d(x))s_{k|k-1}^\sharp(x)\d x\right)^{n-|Z|}.
\end{split}
\end{equation}
The result is obtained by switching summations and rearranging the terms. 
\end{proof}

\subsection{Proof of  second-order \ac{phd} likelihood \eqref{eq:likelihood_panjer}}
\begin{proof}[\unskip\nopunct]
The Panjer assumption of the second-order \ac{phd} filter leads to the \ac{pgfl}
\begin{equation}
\label{eq:panjer_derivative}
\begin{split}
&G_\text{obs}^\natural(g|s) = G^\natural_\c(g|s) G_{k|k-1}^\natural(G_\d(g|\cdot,s)) \\
&= \bigg(1+ \frac{1}{\beta_{k|k-1}}\int\Big[1-\Big(1-p_\d(x|s) \\
&\qquad+ p_\d(x|s) \int \ell(z |x,s) g(z)\d z\Big)\Big]s_{k|k-1}^\natural(x|s)\d x\bigg)^{-\alpha_{k|k-1}}\\
&\quad \cdot \left(1+\frac{1}{\beta_\c} \int (1-g(z))s_\c(z|s)\d z \right)^{-\alpha_\c}.
\end{split}
\end{equation}
The general product rule \eqref{eq:generalproductrule} is needed for the product in \eqref{eq:panjer_derivative} in the same manner as \eqref{eq:cphd_derivative}, replacing $\sharp$ by $\natural$. Using the notations  $G_{k|k-1}^\natural(g|s) := F_\d(g|s)^{-\alpha_{k|k-1}}$ and $G_\c^\natural(g|s) := F_\c(g|s)^{-\alpha_\c}$, the corresponding derivatives are
\begin{equation}
\label{eq:panjer_assocterm}
\begin{split}
&\delta^{|Z|}G_{k|k-1}^\natural(g;(\delta_z)_{z\in Z}|s) = F_\d(g|s)^{-\alpha_{k|k-1}-|Z|} \\
&\cdot\frac{(\alpha_{k|k-1})_{|Z|}}{\beta_{k|k-1}^{|Z|}}\prod_{z\in Z}\int p_\d(x|s)\ell(z|x,s)s_{k|k-1}(x|s) \d x
\end{split}
\end{equation}
and
\begin{equation}
\label{eq:panjer_clutterterm}
\begin{split}
&\delta^{|Z^c|}G_\c^\natural(g;(\delta_z)_{z\in Z^c}|s)\\
&\quad = F_\d(g|s)^{-\alpha_\c-|Z^c|}\frac{(\alpha_\c)_{|Z|}}{\beta_\c^{|Z|}}\prod_{z\in Z^c}s_\c(z).
\end{split}
\end{equation}
Including \eqref{eq:panjer_assocterm} and \eqref{eq:panjer_clutterterm} into \eqref{eq:panjer_derivative}, switching the summations and rearranging the terms leads to the desired result.
\end{proof}
%

\section*{Appendix B: Pseudocode for the single-cluster multi-object filter}

Algorithm \ref{alg:1} shows the modular concept of the discussed approach in form of a particle filter. The sensor parameter at time $k$ is represented by a set of $N$ particles $y_k^i$ where $1 \leq i \leq N$, and each of the particles comes with a corresponding weight $w_k^i$ and and a set of multi-object estimation statistics $\Theta_k^i$. These statistics depend on the multi-object filter of choice: for the \ac{phd} filter, $\Theta_k^i$ is simply the intensity $\mu_k$ of the target population at time $k$, for the second-order \ac{phd} filter it is the intensity $\mu_k$ and variance $\var_k$ and for the \ac{cphd} filter it denotes the intensity $\mu_k$ and the full cardinality distribution $\rho_k$ of the target process, all dependent on the parameter $y_k^i$. Similarly, the functions {\tt Prediction} and {\tt Update}  are the respective prediction and update functions of the filter of choice, and the function {\tt MOL} can be the filter-specific multi-object likelihood of the respective filter, i.e.~\eqref{eq:likelihood_phd}, \eqref{eq:likelihood_panjer}, or \eqref{eq:likelihood_cphd}, or the method given in \cite{Leung2016Multifeature}. Throughout this article, the roulette method was used to perform the resampling, but other approaches can be used instead \cite{Doucet2001Sequential}. Furthermore, resampling is only performed if the effective sample size is low, based on the user-defined parameter $0 \leq r \leq 1$. In this article, $r=0.5$ was used.

\begin{algorithm}[H]
\begin{algorithmic}
	\State \emph{Input}
	\State Set of particles $\{ ({ y}^{i}_k,{ w}^{i}_k, \Theta^{i}_{k}) \}_{i=1}^{N}$
	\State Set of measurements ${Z}_{k+1}$
	\\
	\State \emph{Prediction}
	\For{$1 \leq i \leq N$}
	\State Sample ${ y}_{k+1}^{i} \sim \mathcal{N}({\bf y}^{i}_k, \Sigma_s)$
		\State $\Theta_{k+1|k}^{i} \leftarrow $ {\tt Prediction($\Theta_{k}^{i}$})
	\EndFor 
	\\
	\State \emph{Update}
	\For{$1 \leq i \leq N$}
			\State $\Theta_{k+1}^{i} \leftarrow $ {\tt Update($\Theta_{k+1|k}^{i},Z_{k+1}$})
			\State ${ w}^{i}_{k+1} \leftarrow $ {\tt MOL($\Theta_{k+1|k}^{i},Z_{k+1}$})
	\EndFor 
	\\ 
	\State \emph{Resampling} 
	\State $N_\text{eff} \leftarrow \cfrac{1}{\sum_{i=1}^N (w^i_{k+1})^2}$
	\\	
	\If{$N_\text{eff} \leq r \cdot N$}
	\State $\{{y}^{i}_{k+1}\}_{i=1}^N \leftarrow $ {\tt Resampling($\{({y}^{i}_{k+1},w^i_{k+1})\}_{i=1}^N$})
	\For{$1 \leq i \leq N$}
		\State $w^i_{k+1} \leftarrow \frac{1}{N}$
	\EndFor
	\EndIf
\end{algorithmic}
\caption{Algorithm for parameter estimation.\label{alg:1}}
\end{algorithm}

\bibliographystyle{IEEEtran}
\bibliography{refs}

\end{document}